\numberwithin{equation}{section}
\newtheorem{lemma}{Lemma}[section]
\newtheorem{theorem}{Theorem}[section]
\newsavebox{\tablebox}
\begin{document}
\title{{\Huge Second-order linear structure-preserving modified finite volume schemes for the regularized long-wave equation}}
\author{Qi Hong$^{a}$, ~Jialing Wang$^{b}$, ~Yuezheng Gong$^{c,*}$ \\ \\
$^{a}$ Graduate School of China Academy of Engineering Physics,\\
, Beijing 100088, China\\
$^{b}$ School of Mathematics and Statistics, \\
Nanjing University of Information Science and Technology,\\
Nanjing 210044 , China\\
$^{c}$ College of Science, Nanjing University of Aeronautics and Astronautics\\
Nanjing 210016, China\\}
\date{}
\maketitle

\begin{abstract}
In this paper,  based on the weak form of the Hamiltonian formulation of the regularized long-wave equation  and a novel approach of transforming the original Hamiltonian energy into a quadratic functional, a fully implicit and three linear-implicit energy conservation numerical schemes are respectively proposed. The resulting numerical schemes are proved theoretically to satisfy the energy conservation law in the discrete level. Moreover, these linear-implicit schemes are efficient in practical computation because only a linear system need to be solved at each time step.
The proposed schemes are both second order accurate in time and space.
Numerical experiments are presented to show  all the proposed schemes have satisfactory performance in providing accurate solution and the remarkable energy-preserving property.
\end{abstract}

{\bf Keywords:} modified finite volume method, discrete variational derivative method, linear scheme, regularized long-wave equation, conservation laws, quadratic invariant.
\begin{figure}[b]
\small \baselineskip=10pt
\rule[2mm]{1.8cm}{0.2mm} \par
$^{*}$Corresponding author.\\
E-mail address: gongyuezheng@nuaa.edu.cn~(Yuezheng Gong).
\end{figure}

\pagestyle{myheadings}
\markboth{\hfil 
   \hfil \hbox{}}
{\hbox{} \hfil 
Qi Hong, Jialing Wang and Yuezheng Gong \hfil}

\section{Introduction}\label{sec;introduction}
The regularized long-wave (RLW) type equation
\begin{align}\label{model-eq}
u_t+au_x-\sigma u_{xxt}+(F^{'}(u))_x=0,\quad F(u)=\dfrac{\gamma}{6}u^3,
\end{align}
where $a$, $\sigma$ and $\gamma$ are positive constants.
The RLW equation was proposed first by Peregrine
\cite{Peregrine1966}. Benjamin, Bona, and Mahony derived it as alternative to the Korteweg-de Vries equation for describing unidirectional propagation of weakly along dispersive wave \cite{Benjamin1972}.
The RLW is very important in physics media since it describes phenomena with weak nonlinearity and dispersion
waves, including nonlinear transverse waves in shallow water, ion-acoustic and magneto hydrodynamic waves in plasma and phonon packets in nonlinear crystals.
It admits three conservation laws \cite{Olver1979} given by
\begin{align*}
\mathcal{I}_1=\int_{-\infty}^{\infty}udx,\quad
\mathcal{I}_2=\int_{-\infty}^{\infty}\left(u^2+\sigma u_x^2\right)dx,\quad
\mathcal{I}_3=\int_{-\infty}^{\infty}\left(\dfrac{\gamma}{6}u^3+\dfrac{a}{2}u^2\right)dx,
\end{align*}
which correspond to mass, momentum and energy, respectively. As the RLW equation and its variants have been solved analytically for a restricted set of boundary and initial conditions, its numerical solution has been the subject of many literatures
\cite{Dag2004,Dehghan2011,Dogan2002,Gao2015,Gu2008,Guo1988,Lu2018,Luo1999,Mei2015,MeiChen2012,Meigaocheng2015,ZakiSI2001}.

However, some numerical algorithms obtained using standard approaches could not inherit certain
invariant quantities in the continuous dynamical systems. A numerical method which can
preserve at least some of structural properties of systems is called geometric integrator or structure
preserving algorithm. Nowadays, it has been a criterion to judge the success of the numerical simulation.
When discretizing such a conservative system in space and time, it is a natural
idea to design numerical schemes that preserve rigorously a discrete invariant that is an
equivalence of the continuous one, since they often yield physically correct results and
numerical stability \cite{Bubb2003Book}.

To our knowledge, Sun and Qin \cite{Sun2004} constructed a multi-symplectic Preissman scheme by using the implicit midpoint rule both in space and time. Cai \cite{Cai2009} developed a 6-point multi-symplectic Preissman scheme. An explicit 10-point multi-symplectic Euler-box scheme for the RLW equation was proposed in \cite{02Cai2009},  and the author applied this scheme to solve the modified
RLW in \cite{Cai2010}.
Based on the multi-symplectic Euler box scheme and Preissman box scheme, Li and Sun \cite{Li2013} proposed a new multi-symplectic Euler box scheme for RLW equation with the accuracy of $\mathcal{O}(h^2+\tau^2)$.
In some fields, sometimes it is more convenient to construct numerical algorithms that preserve the energy conservation law rather than the sympletic or multi-symplectic one \cite{Hairer2006Book}.
Fortunately, the energy-preserving methods have been developed on numerical partial differential equations (PDEs) (see \cite{Eidnes2017,Gong2014,Matsuo2007,Quispel2008} for examples).
Furhata \cite{Furihata1999} presented the discrete variational derivative methods (DVDM) for a large class of PDEs that inherit energy conservation or dissipation property. Matsuo and Furihata \cite{Furihata2001} generalized the DVDM for complex-valued nonlinear PDEs.
However, most existing energy-preserving methods of the RLW equation are fully implicit \cite{CaiHong2017,Hammad2015}, and so on. Thus, it needs to use an iteration technique to evaluate numerical solution. Recently, Dahlby and Owren proposed a general framework
for deriving linear-implicit integral-preserving numerical methods for PDEs with polynomial invariants \cite{Dahlby2011}. Inpsired by the contribution in \cite{Dahlby2011}, we will develop linear energy conservation schemes for the RLW.

In this article, we design four energy-preserving numerical methods based on the variational technique (such as finite volume method) for the space semi-discretization. Note that this approach holds the conservation of a semi-discrete energy which is a discrete approximation of the original one in the continuous conservative system . It is a quite natural idea to gain a full discrete scheme that can preserve the original energy as accurately as possible.
To this end, we will use the DVDM in time, which can preserve the semi-discrete energy exactly. First, we apply one-point DVDM in temporal direction. But the resulting numerical scheme is a fully implicit and an
iterative technique is needed in numerical computations. In order to overcome the disadvantage, a novel linear energy conservation scheme is proposed by the DVDM with two points numerical solutions.

In addition, we propose a new idea of transforming the original Hamiltonian energy into a quadratic functional to derive the energy stable schemes. Based on this strategy, we still discrete the weak form of the equivalent formulation for the RLW equation in space by the so-called modified finite volume method (mFVM), which is obtained by the linear finite volume method while the second-order derivative is approximated by the central finite difference. Then, we consider the linear-implicit Crank-Nicolson and Leap-frog scheme  to discretize the conservative semi-discrete system in temporal direction and two second-order linear energy-preserving schemes are obtained readily. In our proposed linear-implicit methods, we just need to
solve one linear system at each time step, which is less inexpensive than any nonlinear
schemes. Compared with several existing works, numerical tests have confirmed this conclusion. Finally, we strictly prove that the four schemes mentioned above preserve the discrete energy exactly. Numerical results demonstrate the advantages of the methods are presented.

The paper is organized as follows. In Section 2, some elementary notations and definitions are briefly introduced firstly and then we describe in detail the idea of the mFVM discretization in space. Section 3 is devoted to designing energy-preserving discretization schemes, which can preserve the discrete energy exactly. The conservation of the quadratic invariant is developed in section 4. Numerical examples are presented to show the validity of theoretical results in Section 5. Finally, we give a concluding remark in Section 6.

\section{Structure-preserving Spatial Discretization based on mFVM }
In this section, we construct a semi-discrete scheme by mFVM for the RLW equation in a domain $\Omega=[a,b]$. The semi-discrete scheme preserves the corresponding energy conservation law at the semi-discrete level.

The equation \eqref{model-eq} can be rewritten the following formulation
\begin{align}\label{RLW-Hamilton-eq1}
&(1-\sigma\partial^2_x)u_t=-\partial_x\dfrac{\delta \mathcal{H}}{\delta u},
\end{align}
where $\frac{\delta \mathcal{H}}{\delta u}$ is a variational derivative of the Hamiltonian function
\begin{align*}
\mathcal{H}(u,u_x)=\int H(u,u_x)dx,\quad H(u,u_x)=\frac{\gamma}{6}u^3+\frac{a}{2}u^2.
\end{align*}
Let $H_p^1(\Omega)=\left\{u\in H^1(\Omega):u(x)=u(x+b-a)\right\}$. Further, $\forall\;u,v\in L^2(\Omega)$, $(u,v)=\int_{\Omega}uvdx$.
A weak formulation of the Galerkin discretization for \eqref{RLW-Hamilton-eq1} stats from finding $u\in H_p^1(\Omega)$, such that
\begin{align}\label{weak-form-eq}
\Big((1-\sigma\partial_x^2)u_t,v\Big)=-\Big(\partial_x\dfrac{\delta\mathcal{H}}{\delta u},v\Big),\quad \forall\; v\in H_p^{1}(\Omega).
\end{align}
Obviously, the $\mathcal{H}$-conservation law can be explicitly obtained by formal calculation
\begin{align*}
\dfrac{d}{dt}\mathcal H=\dfrac{d}{dt}\int Hdx=(\dfrac{\partial H}{\partial u},u_t)
=(\dfrac{\delta \mathcal{H}}{\delta u},-(1-\sigma\partial^2_x)^{-1}\partial_x\dfrac{\delta \mathcal{H}}{\delta u})=0,
\end{align*}
where the second equality is just the chain rule and the skew-adjoint operator of $(1-\sigma\partial^2_x)^{-1}\partial_x$ has been used.

Suppose that $\Omega$ is partitioned into a number of non-overlapped cells that form the so-called primary cell $\Omega_h=\left\{\Omega_i|\Omega_i=[x_i,x_{i+1}],\; i=0, 1, \cdots, N \right\}$ with the mesh size $h=x_{i+1}-x_{i}=(b-a)/N$.  Denote by $x_{i+1/2}$ cell center of cell $[x_i,x_{i+1}]$. In this paper, the model is imposed with a periodic boundary condition. Accordingly, we define a dual grid $\Omega_h^*$ as
\begin{align*}
\Omega_h^*=\{\Omega_0^*\cup\Omega_i^*:i=1,2,\cdots N-1\},
\end{align*}
where $\Omega_0^*=[x_0,x_{1/2}]\cup [x_{N-1/2},x_N]$ and $\Omega_i^*=[x_{i-1/2},x_{i+1/2}]$.
The trial function space $U_h$ is taken as the linear element space and the corresponding basis functions are given by
\begin{align*}
\psi_0=
\begin{cases}
(x_1-x)/h,\quad &x_0\leq x\leq x_1,\\
(x-x_{N-1})/h,\quad &x_{N-1}\leq x\leq x_N,\\
0,&\mathrm{otherwise},
\end{cases}
\quad
\psi_i(x)=
\begin{cases}
(x-x_{i-1})/h,\quad &x_{i-1}\leq x\leq x_{i},\\
(x_{i+1}-x)/h,\quad &x_{i}\leq x\leq x_{i+1},\\
0,&\mathrm{otherwise},
\end{cases}
\end{align*}
where $i=1,2,\cdots,N-1$.
For $\forall\; u_h\in U_h$, we have
\begin{align*}
u_h=\sum_{i=0}^{N-1}u_i\psi_i(x),
\end{align*}
where $u_i=u_h(x_i,t)$.
Define the test function space $V_h$ as
\begin{align*}
V_h=\mathrm{Span}\left\{\chi_i(x)|i=0,1,\cdots,N-1\right\},
\end{align*}
where $\chi_i$ denotes the characteristic function of dual cell $\Omega_i^*$ associated with $x_i$. We have
\begin{align*}
v_h=\sum_{i=0}^{N-1}v_i\chi_i,\quad \forall\; v_h\in V_h,
\end{align*}
where $v_i=v_h(x_i)$. We also define an interpolation operator $I_h:L^2(\Omega) \rightarrow U_h$ by
$$I_hu=\sum_{i=0}^{N-1}u(x_i,t)\psi_i(x),\quad\forall\;u\in L^2(\Omega).$$
Throughout this paper, the hollow letters $\mathbb{A}, \mathbb{B}, \mathbb{C}, \cdots$
will be used to denote rectangular matrices with a number of columns greater than one, while the bold ones $\mathbf{u}, \mathbf{v}, \mathbf{ w}, \cdots$ represent vectors.
For a Galerkin approximation of \eqref{weak-form-eq}, we look for $u_h(x,t)\in U_h$ such that, for any $v_h\in V_h$,
\begin{align}\label{semi-discrete-eq}
\Big((1-\sigma\partial_x^2)(u_h)_t,v_h\Big)=-\left(\partial_x\dfrac{\delta \mathcal{H}}{\delta u},v_h\right),
\end{align}
where
$\delta\mathcal{H}/\delta u=\gamma I_hu^2/2+au_h$.
Let $X_h=\{\mathbf{u}:\mathbf{u}=(u_0,u_1,\cdots,u_{N-1})^T\}$ be the space of grid function on $\Omega_h$.
Based on the above results and by some simple calculations, we get
\begin{align*}
\int_{\Omega}(u_h)_t\chi_idx&=\int_{x_{i}-\frac{1}{2}}^{x_i}u_{i-1}\psi_{i-1}dx+\int_{x_{i-\frac{1}{2}}}^{x_{i+\frac{1}{2}}}u_i\psi_idx+\int_{x_i}^{x_{i}+\frac{1}{2}}u_{i+1}\psi_{i+1}dx\\
&=\dfrac{h}{8}(\partial_t u_{i-1}+6\partial_t u_{i}+\partial_t u_{i+1}),\\
\int_{\Omega}\partial_x^2(u_h)_t\chi_idx&\approx\int_{\Omega}\delta_x^2(\partial_tu_i)\chi_idx=
\dfrac{\partial_t u_{i+1}-2\partial_t u_i+\partial_t u_{i-1}}{h},\\
\int_{\Omega}\partial_x(\dfrac{\delta \mathcal{H}}{\delta u})\chi_idx&=(\dfrac{\delta\mathcal{H}}{\delta u})_{i+\frac{1}{2}}-(\dfrac{\delta\mathcal{H}}{\delta u})_{i-\frac{1}{2}}=\dfrac{1}{2}\big[(\dfrac{\delta\mathcal{H}}{\delta u})_{i+1}-(\dfrac{\delta\mathcal{H}}{\delta u})_{i-1}\big],
\end{align*}
where the operator $\delta_x^2$ denotes the two-order central difference.
The corresponding discrete inner product and discrete $L^2$ norm are given by
\begin{align*}
(\mathbf{u},\mathbf{v})_h=h\sum_{j=0}^{N-1}u_jv_j,\quad\|\mathbf{u}\|_h=(\mathbf{u},\mathbf{u})_h^{1/2},\quad\forall\;\mathbf{u},\mathbf{v}\in X_h.
\end{align*}
Finally, we define another operator ``$\odot$'' for element by element multiplication between two arrays of same size as
\begin{align*}
(\mathbf{v}\odot\mathbf{w})_j=(\mathbf{w}\odot\mathbf{v})_j=v_jw_j,
\end{align*}
where $\mathbf{v}, \mathbf{w}\in X_h$.
In fact, \eqref{semi-discrete-eq} is equivalent to the following system
\begin{align}\label{semi-discrete-eq1}
(\mathbb{A}-\sigma\mathbb{B})\dfrac{d\mathbf{u}}{dt}=-\mathbb{C}\dfrac{\delta \mathcal{H}_h}{\delta \mathbf{u}},
\end{align}
where $\mathbf{u}=(u_0(t),u_1(t),\cdots,u_{N-1}(t))^T$, $\mathcal{H}_h=h\sum\limits_{j=0}^{N-1}\big(\dfrac{\gamma u_j^3}{6}+\dfrac{au_j^2}{2}\big)$ and
\begin{align*}
\mathbb{A}=\dfrac{h}{8}\left(
\begin{array}{cccccc}
6& 1& 0&  \cdots& 1\\
1& 6& 1&  \cdots& 0\\
\vdots& \vdots& \vdots& \vdots& \vdots \\
1& 0& \cdots& 1& 6
\end{array}
\right),\quad
\mathbb{B}=\dfrac{1}{h}\left(
\begin{array}{ccccc}
-2& 1& 0&  \cdots& 1\\
1& -2& 1&  \cdots& 0\\
\vdots& \vdots&  \vdots& \vdots& \vdots \\
1& 0&  \cdots&\ 1& -2
\end{array}
\right),\quad
\mathbb{C}=\dfrac{1}{2}\left(
\begin{array}{ccccc}
0& 1&  0& \cdots& -1\\
-1& 0& 1&  \cdots& 0\\
\vdots& \vdots&  \vdots& \vdots& \vdots \\
1&\ 0& \cdots& -1& 0
\end{array}
\right).
\end{align*}
\begin{lemma}\label{integ-by-parts-lem}\cite{Gong2017}
	For any real square $\mathbb{A}$ and $\mathbf{u},\mathbf{v}\in X_h$, we have
	\begin{align*}
	(\mathbb{A}\mathbf{u},\mathbf{v})_h=(\mathbf{u},\mathbb{A}^T\mathbf{v})_h.
	\end{align*}
\end{lemma}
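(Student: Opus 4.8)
The statement is a discrete analogue of integration by parts, and the plan is simply to unwind the definition of the discrete inner product and use the definition of the matrix transpose. The first step is to recall that $(\mathbf{u},\mathbf{v})_h = h\sum_{j=0}^{N-1} u_j v_j$ is nothing but $h$ times the standard Euclidean inner product on $\mathbb{R}^{N}$, so it suffices to establish the identity $(\mathbb{A}\mathbf{u})^{T}\mathbf{v} = \mathbf{u}^{T}(\mathbb{A}^{T}\mathbf{v})$ and then multiply through by $h$.

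Next I would write out the left-hand side componentwise. Denoting the entries of $\mathbb{A}$ by $\mathbb{A}_{jk}$, one has $(\mathbb{A}\mathbf{u})_j = \sum_{k=0}^{N-1}\mathbb{A}_{jk}u_k$, hence
\begin{align*}
(\mathbb{A}\mathbf{u},\mathbf{v})_h = h\sum_{j=0}^{N-1}\Big(\sum_{k=0}^{N-1}\mathbb{A}_{jk}u_k\Big)v_j = h\sum_{j=0}^{N-1}\sum_{k=0}^{N-1}\mathbb{A}_{jk}u_k v_j.
\end{align*}
Since the sums are finite, I can interchange the order of summation and regroup:
\begin{align*}
h\sum_{k=0}^{N-1}\sum_{j=0}^{N-1}\mathbb{A}_{jk}u_k v_j = h\sum_{k=0}^{N-1}u_k\Big(\sum_{j=0}^{N-1}(\mathbb{A}^{T})_{kj}v_j\Big) = h\sum_{k=0}^{N-1}u_k(\mathbb{A}^{T}\mathbf{v})_k = (\mathbf{u},\mathbb{A}^{T}\mathbf{v})_h,
\end{align*}
where I used $(\mathbb{A}^{T})_{kj} = \mathbb{A}_{jk}$ in the middle equality. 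This completes the argument.

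There is no genuine obstacle here: the only point requiring any care is the bookkeeping of indices and the legitimacy of swapping the two finite sums, which is immediate. The lemma is stated separately because this ``summation by parts'' identity, together with skew-symmetry/symmetry properties of the specific matrices $\mathbb{A}$, $\mathbb{B}$, $\mathbb{C}$ introduced above (note $\mathbb{A}^{T}=\mathbb{A}$, $\mathbb{B}^{T}=\mathbb{B}$, $\mathbb{C}^{T}=-\mathbb{C}$ by inspection of their circulant structure), is exactly what will be invoked repeatedly in Sections 3 and 4 to verify discrete energy conservation.
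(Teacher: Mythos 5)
Your proof is correct: expanding the discrete inner product, interchanging the two finite sums, and using $(\mathbb{A}^{T})_{kj}=\mathbb{A}_{jk}$ is exactly the standard argument, and your side remarks about the symmetry of $\mathbb{A}$, $\mathbb{B}$ and the skew-symmetry of $\mathbb{C}$ are also accurate. Note that the paper itself gives no proof of this lemma, deferring entirely to the cited reference \cite{Gong2017}, so there is nothing in the text to compare against; your elementary computation is the natural (and essentially the only) way to establish it.
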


\begin{theorem}
	The semi-discrete scheme \eqref{semi-discrete-eq1} conserves the discrete energy conservation law
	\begin{align*}
	\dfrac{d}{dt}\mathcal{H}_h=0,
	\end{align*}
	where $\mathcal{H}_h=h\sum\limits_{j=0}^{N-1}\big(\dfrac{\gamma u_j^3}{6}+\dfrac{au_j^2}{2}\big)$.
\end{theorem}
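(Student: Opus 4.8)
The plan is to carry the formal continuous computation displayed just above the theorem down to the discrete level: the role there played by the skew-adjoint operator $(1-\sigma\partial_x^2)^{-1}\partial_x$ is now played by the matrix $(\mathbb{A}-\sigma\mathbb{B})^{-1}\mathbb{C}$, with the chain rule applied entrywise.

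First I would differentiate $\mathcal{H}_h$ along a solution $\mathbf{u}=\mathbf{u}(t)$ of \eqref{semi-discrete-eq1}. Since $\partial\mathcal{H}_h/\partial u_j = h(\gamma u_j^2/2+a u_j)$ and, by definition, $(\delta\mathcal{H}_h/\delta\mathbf{u})_j = \gamma u_j^2/2 + a u_j$, the chain rule gives
\begin{align*}
\frac{d}{dt}\mathcal{H}_h = \sum_{j=0}^{N-1}\frac{\partial\mathcal{H}_h}{\partial u_j}\,\frac{du_j}{dt} = \Big(\frac{\delta\mathcal{H}_h}{\delta\mathbf{u}},\,\frac{d\mathbf{u}}{dt}\Big)_h .
\end{align*}

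Next I would record the structure of the three coefficient matrices. From their explicit forms, $\mathbb{A}$ and $\mathbb{B}$ are symmetric while $\mathbb{C}$ is skew-symmetric, and all three are circulant (a consequence of the periodic boundary condition), hence commute pairwise. The matrix $M:=\mathbb{A}-\sigma\mathbb{B}$ is moreover symmetric positive definite: its eigenvalues are $\frac{h}{8}(6+2\cos\theta_k)+\frac{\sigma}{h}(2-2\cos\theta_k)>0$ with $\theta_k=2\pi k/N$, so $M$ is invertible and $M^{-1}$ is again symmetric and circulant. Therefore $M^{-1}$ and $\mathbb{C}$ commute, and the matrix $\mathbb{D}:=(\mathbb{A}-\sigma\mathbb{B})^{-1}\mathbb{C}$ satisfies
\begin{align*}
\mathbb{D}^T = \mathbb{C}^T (M^{-1})^T = -\mathbb{C}M^{-1} = -M^{-1}\mathbb{C} = -\mathbb{D},
\end{align*}
i.e.\ $\mathbb{D}$ is skew-symmetric; this is the exact discrete analogue of the skew-adjointness of $(1-\sigma\partial_x^2)^{-1}\partial_x$.

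Finally I would substitute $d\mathbf{u}/dt = -\mathbb{D}\,(\delta\mathcal{H}_h/\delta\mathbf{u})$ from \eqref{semi-discrete-eq1} into the chain-rule identity. Writing $\mathbf{w}:=\delta\mathcal{H}_h/\delta\mathbf{u}$ and using Lemma \ref{integ-by-parts-lem} together with $\mathbb{D}^T=-\mathbb{D}$,
\begin{align*}
(\mathbf{w},\mathbb{D}\mathbf{w})_h = (\mathbb{D}\mathbf{w},\mathbf{w})_h = (\mathbf{w},\mathbb{D}^T\mathbf{w})_h = -(\mathbf{w},\mathbb{D}\mathbf{w})_h,
\end{align*}
so $(\mathbf{w},\mathbb{D}\mathbf{w})_h=0$ and hence $\frac{d}{dt}\mathcal{H}_h = -(\mathbf{w},\mathbb{D}\mathbf{w})_h = 0$, as claimed. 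The only genuinely nontrivial step is establishing that $(\mathbb{A}-\sigma\mathbb{B})^{-1}\mathbb{C}$ is skew-symmetric: symmetry of $\mathbb{A},\mathbb{B}$ and skew-symmetry of $\mathbb{C}$ are read off immediately, but skew-symmetry of the product additionally requires $M^{-1}$ and $\mathbb{C}$ to commute, which is where the circulant (periodic) structure is essential, and it also forces one to verify that $M$ is nonsingular — positive definiteness takes care of that. Everything else is the chain rule and a one-line application of Lemma \ref{integ-by-parts-lem}.
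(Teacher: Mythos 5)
Your proof is correct and follows the same route as the paper's: pair the equation with $\delta\mathcal{H}_h/\delta\mathbf{u}$, kill the right-hand side using the skew-symmetry of $(\mathbb{A}-\sigma\mathbb{B})^{-1}\mathbb{C}$ together with Lemma \ref{integ-by-parts-lem}, and identify the left-hand side with $\frac{d}{dt}\mathcal{H}_h$ via the chain rule. The only difference is that you actually justify the skew-symmetry of $(\mathbb{A}-\sigma\mathbb{B})^{-1}\mathbb{C}$ (through the circulant structure forced by periodicity and the positive definiteness of $\mathbb{A}-\sigma\mathbb{B}$), a point the paper asserts without proof.
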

\begin{proof}
	Computing the discrete inner product on the both sides of \eqref{semi-discrete-eq1} with $\delta \mathcal{H}_h/\delta \mathbf{u}$, we have
	\begin{align}\label{semi-disc-theom-eq1}
	\left(\dfrac{d\mathbf{u}}{dt},\dfrac{\delta \mathcal{H}_h}{\delta \mathbf{u}}\right)_h=-\left((\mathbb{A}-\sigma \mathbb{B})^{-1}\mathbb{C}\dfrac{\delta \mathcal{H}_h}{\delta \mathbf{u}},\dfrac{\delta \mathcal{H}_h}{\delta \mathbf{u}}\right)_h.
	\end{align}
	Using the antisymmetry of matrix $(\mathbb{A}-\sigma \mathbb{B})^{-1}\mathbb{C}$ and Lemma \ref{integ-by-parts-lem}, the right-hand of \eqref{semi-disc-theom-eq1} becomes
	\begin{align}\label{semi-disc-theom-eq2}
	\left((\mathbb{A}-\sigma \mathbb{B})^{-1}\mathbb{C}\dfrac{\delta \mathcal{H}_h}{\delta \mathbf{u}},\dfrac{\delta \mathcal{H}_h}{\delta \mathbf{u}}\right)_h=0.
	\end{align}
	Noth that
	\begin{align}\label{semi-disc-theom-eq3}
	\left(\dfrac{d\mathbf{u}}{dt},\dfrac{\delta \mathcal{H}_h}{\delta \mathbf{u}}\right)_h=\dfrac{d}{dt}\mathcal{H}_h.
	\end{align}
	Combining the \eqref{semi-disc-theom-eq1}, \eqref{semi-disc-theom-eq2} with \eqref{semi-disc-theom-eq3} yields the desired result and completes the proof.
\end{proof}

\section{Energy conservation scheme for the RLW equation}
In this section, we construct two energy-preserving schemes for the RLW equation by the DVDM. One is a fully-implicity energy-preserving (FIEP) scheme, the other is a linear-implicity energy-preserving (LIEP) scheme.

For a positive integer $N_t$, we denote time step $\tau=T/N_t$, $t_n=n\tau,\ 0\leq n\leq N_t$. We define
\begin{align*}
&\delta_t^+{\mathbf u}^n=\dfrac{\mathbf u^{n+1}-\mathbf u^n}{\tau},\quad \delta_t\mathbf{u}^n=\dfrac{\mathbf{u}^{n+1}-\mathbf{u}^{n-1}}{2\tau},\\[0.3cm]
&\mathbf{\overline{u}}^{n+\frac{1}{2}}=\dfrac{3\mathbf{u}^{n}-\mathbf{u}^{n-1}}{2},\quad
\mathbf{u}^{n+\frac{1}{2}}=\dfrac{\mathbf{u}^{n+1}+\mathbf{u}^n}{2},\quad A_t\mathbf{u}^n=\dfrac{\mathbf u^{n+1}+\mathbf{u}^{n-1}}{2}.
\end{align*}

\subsection{Fully-implicit energy-preserving (FIEP) scheme for the RLW equation}
We define a discrete form of the energy and its partial derivative as
\begin{align*}
H_h^n(u_j^n)=\dfrac{\gamma}{6}(u_j^n)^3+\dfrac{a}{2}(u_j^n)^2,\quad
{\mathcal{H}}_h^n=h\sum_{j=0}^{N-1}H_h^n(u^n_j),
\end{align*}
and
\begin{align*}
\dfrac{\partial H_h^n}{\partial(u_j^n,u_j^{n+1})}&=\dfrac{\gamma}{6}\dfrac{(u_j^{n+1})^3-(u_j^n)^3}{u_j^{n+1}-u_j^n}
+\dfrac{a}{2}\dfrac{(u_j^{n+1})^2-(u_j^n)^2}{u_j^{n+1}-u_j^n}\\[0.3cm]
&=\gamma \dfrac{(u_j^{n+1})^2+u_j^{n+1}u_j^n+(u_j^n)^2}{6}+
a\dfrac{u_j^{n+1}+u_j^n}{2}.
\end{align*}
We approximate $\delta\mathcal{H}/\delta u$ by the discrete version of the variational
derivative, i.e.,
\begin{align*}
\dfrac{\delta{\mathcal{H}}_h^n}{\delta(u_j^n,u_j^{n+1})}=\dfrac{\partial H^n_h}{\partial(u_j^n,u_j^{n+1})}-\dfrac{\partial}{\partial x}\left(\dfrac{\partial H_h^n}{\partial((u_x)_j^n,(u_x)_j^{n+1})}\right)=\dfrac{\partial H^n_h}{\partial(u_j^n,u_j^{n+1})}.
\end{align*}
We discretize the above semi-discrete system \eqref{semi-discrete-eq} in time by DVDM, thus the fully discrete scheme is given by
\begin{align*}
\Big((1-\sigma \partial_x^2)\delta_t^{+}u^n_h,v_h\Big)=-\left(\partial_x\left(\dfrac{\delta {\mathcal{H}}_h^n}{\delta(u^{n+1},u^n)}\right),v_h\right).
\end{align*}
For simplicity, it can be written as the following compact representation
\begin{align}\label{matrix-form-eq1}
\delta_t^+\mathbf{u}^n=-(\mathbb{A}-\sigma \mathbb{B})^{-1}\mathbb{C}F(\mathbf{u}^n,\mathbf{u}^{n+1}),
\end{align}
where
\begin{align*}
F(\mathbf{u}^n,\mathbf{u}^{n+1})=\dfrac{\gamma}{6}(\mathbf{u}^{n+1}\odot\mathbf{u}^{n+1}
+\mathbf{u}^{n}\odot\mathbf{u}^{n+1}+\mathbf{u}^n\odot\mathbf{u}^n)+\dfrac{a}{2}(\mathbf{u}^{n+1}+\mathbf{u}^n).
\end{align*}
\begin{theorem}\label{theorem-FIEP}
	Under the periodic boundary condition, the FIEP scheme preserves the discrete
	energy conservation law
	\begin{align*}
	\mathcal{H}_h^{n+1}=\mathcal{H}_h^n,\quad \forall\; n\geq 0,
	\end{align*}
	where $\mathcal{H}_h^n=h\sum\limits_{j=0}^{N-1}\big(\dfrac{\gamma (u^n_j)^3}{6}+\dfrac{a(u^n_j)^2}{2}\big)$ is called the total energy in the discrete level.
\end{theorem}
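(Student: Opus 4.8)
The plan is to replay the continuous energy computation at the fully discrete level, with $F(\mathbf{u}^n,\mathbf{u}^{n+1})$ playing the role of $\delta\mathcal{H}/\delta u$ and $\delta_t^+\mathbf{u}^n$ the role of $u_t$. First I would take the discrete inner product $(\cdot,\cdot)_h$ of the scheme \eqref{matrix-form-eq1} with $\tau F(\mathbf{u}^n,\mathbf{u}^{n+1})$. Since $\tau\,\delta_t^+\mathbf{u}^n=\mathbf{u}^{n+1}-\mathbf{u}^n$, this turns the left-hand side into $\big(\mathbf{u}^{n+1}-\mathbf{u}^n,\,F(\mathbf{u}^n,\mathbf{u}^{n+1})\big)_h$ and the right-hand side into $-\tau\big((\mathbb{A}-\sigma\mathbb{B})^{-1}\mathbb{C}\,F(\mathbf{u}^n,\mathbf{u}^{n+1}),\,F(\mathbf{u}^n,\mathbf{u}^{n+1})\big)_h$.

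For the right-hand side I would invoke the antisymmetry of $(\mathbb{A}-\sigma\mathbb{B})^{-1}\mathbb{C}$ with respect to $(\cdot,\cdot)_h$ — the very property already exploited in the proof of the semi-discrete theorem, which follows from Lemma \ref{integ-by-parts-lem} together with the facts that $\mathbb{A},\mathbb{B}$ are symmetric circulant and $\mathbb{C}$ is skew-symmetric circulant (circulant matrices commute, so $(\mathbb{A}-\sigma\mathbb{B})^{-1}\mathbb{C}$ is again skew-symmetric). Hence this term vanishes identically.

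For the left-hand side the decisive point is the discrete product (chain) rule built into the definition of $F$: componentwise,
\[
\Big(\tfrac{\gamma}{6}\big((u_j^{n+1})^2+u_j^n u_j^{n+1}+(u_j^n)^2\big)+\tfrac{a}{2}(u_j^{n+1}+u_j^n)\Big)(u_j^{n+1}-u_j^n)=H_h^{n+1}(u_j^{n+1})-H_h^n(u_j^n),
\]
which is precisely why $F$ was constructed from the symmetric difference quotient of $H_h$. Summing over $j$ and multiplying by $h$ gives $\big(\mathbf{u}^{n+1}-\mathbf{u}^n,F(\mathbf{u}^n,\mathbf{u}^{n+1})\big)_h=\mathcal{H}_h^{n+1}-\mathcal{H}_h^n$. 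Equating the two sides yields $\mathcal{H}_h^{n+1}=\mathcal{H}_h^n$, and iterating over $n$ completes the argument.

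I do not anticipate a genuine obstacle here. The only items requiring care are: (i) inserting the scalar factor $\tau$ so that the telescoping energy increment $\mathcal{H}_h^{n+1}-\mathcal{H}_h^n$ actually appears on the left-hand side; and (ii) observing that the periodic boundary condition is exactly what makes $\mathbb{A},\mathbb{B},\mathbb{C}$ circulant, so that no boundary terms survive the discrete summation by parts — this is where the hypothesis of the theorem is used. Both are routine verifications.
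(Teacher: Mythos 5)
Your proposal is correct and follows the same route as the paper's own proof: pair the scheme \eqref{matrix-form-eq1} with $F(\mathbf{u}^n,\mathbf{u}^{n+1})$, kill the right-hand side via the skew-symmetry of $(\mathbb{A}-\sigma\mathbb{B})^{-1}\mathbb{C}$ together with Lemma \ref{integ-by-parts-lem}, and telescope the left-hand side using the difference-quotient structure of $F$ to obtain $\frac{1}{\tau}(\mathcal{H}_h^{n+1}-\mathcal{H}_h^n)=0$. You merely spell out two details the paper leaves implicit (the componentwise chain-rule identity and the circulant argument for skew-symmetry), both of which check out.
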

\begin{proof}
	Noticing that $(\mathbb{A}-\sigma\mathbb{B})^{-1}\mathbb{C}$ is a skew-symmetric matrix and using Lemma \ref{integ-by-parts-lem}, we have
	\begin{align*}
	\left((\mathbb{A}-\sigma\mathbb{B})^{-1}\mathbb{C}F(\mathbf{u}^n,\mathbf{u}^{n+1}),F(\mathbf{u}^n,\mathbf{u}^{n+1})\right)_h=0.
	\end{align*}
	Therefore, computing the discrete inner product on the both side of \eqref{matrix-form-eq1} with $F(\mathbf{u}^n,\mathbf{u}^{n+1})$, we have
	\begin{align*}
	0=(\delta_t^{+}\mathbf{u}^n,F(\mathbf{u}^n,\mathbf{u}^{n+1}))_h=\dfrac{1}{\tau}(\mathcal{H}_h^{n+1}-\mathcal{H}_h^{n}).
	\end{align*}
	The proof is complete.
\end{proof}

\subsection{Linear-implicity energy-preserving (LIEP) scheme for the RLW equation}
Different from the previous one, we define a new discrete energy and its partial derivative as follows:
\begin{align*}
{\mathcal{H}}_h^n=h\sum_{j=0}^{n-1} H_h^n(u_j^n,u_j^{n+1}),\quad
H_h^n(u_j^n,u_j^{n+1})=\gamma\dfrac{u_j^{n+1}u_j^n(u_j^{n+1}+u_j^n)}{12}+a\dfrac{u_j^nu_j^{n+1}}{2},
\end{align*}
and
\begin{align*}
\dfrac{\partial H_h^n}{\partial(u_j^{n-1},u_j^n,u_j^{n+1})}=\dfrac{ H_h^n(u_j^{n+1},u_j^n)- H_h^{n-1}(u_j^n,u_j^{n-1})}{\frac{1}{2}(u_j^{n+1}-u_j^{n-1})}.
\end{align*}
Therefore, the approximation of $\delta\mathcal{H}/\delta u$ can be given by
\begin{align*}
\dfrac{\delta\mathcal{H}^n_h}{\delta(u_j^{n-1},u_j^n,u_j^{n+1})}=\dfrac{\partial H_h^n}{\partial(u_j^{n-1},u_j^n,u_j^{n+1})}=au_j^n+\gamma\dfrac{u_j^n(u_j^{n-1}+u_j^n+u_j^{n+1})}{6}.
\end{align*}
Applying the novel DVDM mentioned above in the temporal direction of the semi-discrete form \eqref{semi-discrete-eq}, we can obtain the full discrete scheme
\begin{align}\label{scheme2-eq1}
\Big((1-\sigma \partial_x^2)\delta_tu^n_h,v_h\Big)=-\left(\partial_x\left(\dfrac{\delta\mathcal{H}_h^n}{\delta(u^{n-1},u^n,u^{n+1})}\right),v_h\right).
\end{align}
Obviously, we can rewrite \eqref{scheme2-eq1} in the following compact form
\begin{align}\label{scheme2-eq2}
(\mathbb{A}-\sigma \mathbb{B})\delta_t\mathbf{u}^n=-\mathbb{C}G(\mathbf{u}^{n-1},\mathbf{u}^n,\mathbf{u}^{n+1}),
\end{align}
where
\begin{align*}
G(\mathbf{u}^{n-1},\mathbf{u}^n,\mathbf{u}^{n+1})=a\mathbf{u}^n+\dfrac{\gamma}{6}\mathbf{u}^n\odot(\mathbf{u}^{n+1}+\mathbf{u}^n+\mathbf{u}^{n-1}).
\end{align*}
Note that the above system is a three-level scheme, where $\mathbf{u}^1$ can be given by a suitable two-level scheme
\begin{align*}
(\mathbb{A}-\sigma \mathbb{B})\delta_t^+\mathbf{u}^0=-\mathbb{C}F(\mathbf{u}^0,\mathbf{u}^1),
\end{align*}
where
\begin{align*}
F(\mathbf{u}^0,\mathbf{u}^1)=\dfrac{\gamma}{6}\big(\mathbf{u}^{1}\odot\mathbf{u}^{1}+\mathbf{u}^{1}\odot\mathbf{u}^0+\mathbf{u}^0\odot\mathbf{u}^0\big)+\dfrac{a}{2}(\mathbf{u}^{1}+\mathbf{u}^0).
\end{align*}

\begin{theorem}
	Under the periodic boundary condition, the LIEP scheme preserves the discrete
	energy conservation law
	\begin{align*}
	\mathcal{H}_h^{n+1}=\mathcal{H}^n_h,
	\end{align*}
	where
	\begin{align*}
	\mathcal{H}_h^n=h\sum\limits_{j=0}^{N-1}\Big(\gamma\dfrac{u_j^{n+1}u_j^n(u_j^{n+1}+u_j^n)}{12}+a\dfrac{u_j^nu_j^{n+1}}{2}\Big).
	\end{align*}
\end{theorem}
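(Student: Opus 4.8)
The plan is to reproduce the mechanism behind Theorem~\ref{theorem-FIEP}: the LIEP scheme is an antisymmetric-operator evolution, and the nonlinearity $G$ has been engineered so that testing it against $\delta_t\mathbf{u}^n$ telescopes the discrete energy. The only genuinely new computation is the telescoping identity for the present choice of $G$ and $H_h^n$.

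First I would rewrite the scheme \eqref{scheme2-eq2} in the equivalent form
\begin{align*}
\delta_t\mathbf{u}^n = -(\mathbb{A}-\sigma\mathbb{B})^{-1}\mathbb{C}\,G(\mathbf{u}^{n-1},\mathbf{u}^n,\mathbf{u}^{n+1}),
\end{align*}
which is legitimate because $\mathbb{A}-\sigma\mathbb{B}$ is positive definite, hence invertible. Put $G:=G(\mathbf{u}^{n-1},\mathbf{u}^n,\mathbf{u}^{n+1})$ and take the discrete inner product of this identity with $G$. Exactly as in the proof of Theorem~\ref{theorem-FIEP}, the matrix $(\mathbb{A}-\sigma\mathbb{B})^{-1}\mathbb{C}$ is skew-symmetric (the three matrices are circulant, so $\mathbb{C}$ commutes with $(\mathbb{A}-\sigma\mathbb{B})^{-1}$, and $\mathbb{C}$ is skew while $\mathbb{A}-\sigma\mathbb{B}$ is symmetric), so Lemma~\ref{integ-by-parts-lem} gives
\begin{align*}
\big((\mathbb{A}-\sigma\mathbb{B})^{-1}\mathbb{C}\,G,\,G\big)_h = 0,
\qquad\text{whence}\qquad
\big(\delta_t\mathbf{u}^n,\,G\big)_h = 0.
\end{align*}

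The crux is to identify $(\delta_t\mathbf{u}^n,G)_h$ with $\tfrac1\tau(\mathcal{H}_h^n-\mathcal{H}_h^{n-1})$. Since $G_j=\dfrac{\partial H_h^n}{\partial(u_j^{n-1},u_j^n,u_j^{n+1})}$ by construction, I would verify the pointwise identity underlying the DVDM,
\begin{align*}
G_j\cdot\frac{u_j^{n+1}-u_j^{n-1}}{2}=H_h^n(u_j^{n+1},u_j^n)-H_h^{n-1}(u_j^n,u_j^{n-1}),
\end{align*}
by direct expansion: writing $p=u_j^{n-1}$, $q=u_j^n$, $r=u_j^{n+1}$, substitute $G_j=aq+\tfrac{\gamma}{6}q(p+q+r)$ on the left, expand $H_h^n(r,q)-H_h^{n-1}(q,p)=\tfrac{\gamma}{12}q\big(r(r+q)-p(q+p)\big)+\tfrac{a}{2}q(r-p)$ on the right, and match after factoring $r^2-p^2=(r-p)(r+p)$. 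Multiplying by $h$, summing over $j$, dividing by $\tau$, and using that $H_h^n$ is symmetric in its two arguments — so that $h\sum_j H_h^n(u_j^{n+1},u_j^n)=\mathcal{H}_h^n$ and $h\sum_j H_h^{n-1}(u_j^n,u_j^{n-1})=\mathcal{H}_h^{n-1}$ — yields $(\delta_t\mathbf{u}^n,G)_h=\tfrac1\tau(\mathcal{H}_h^n-\mathcal{H}_h^{n-1})$. Combined with the vanishing established above, $\mathcal{H}_h^n=\mathcal{H}_h^{n-1}$ for all $n\geq1$, equivalently $\mathcal{H}_h^{n+1}=\mathcal{H}_h^n$, which is the assertion.

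I expect no real obstacle: the argument is parallel to the FIEP case and the algebra is short. The only point demanding attention is the bookkeeping of time levels in the pointwise identity together with the symmetry $H_h^n(x,y)=H_h^n(y,x)$, without which the two halves of the telescoped difference would fail to coincide with $\mathcal{H}_h^n$ and $\mathcal{H}_h^{n-1}$. Solvability of the linear system for $\mathbf{u}^{n+1}$ is a separate matter, not needed for the conservation statement, and follows again from the positive definiteness of $\mathbb{A}-\sigma\mathbb{B}$.
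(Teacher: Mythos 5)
Your proof is correct and follows exactly the route the paper intends: the paper omits this proof with the remark that it is ``similar to Theorem~\ref{theorem-FIEP}'', and your argument is precisely that FIEP-style argument adapted to the three-level case. You supply the two details the paper leaves implicit --- the skew-symmetry of $(\mathbb{A}-\sigma\mathbb{B})^{-1}\mathbb{C}$ via the circulant structure, and the pointwise DVDM telescoping identity $G_j\cdot\tfrac{1}{2}(u_j^{n+1}-u_j^{n-1})=H_h^n(u_j^{n+1},u_j^n)-H_h^{n-1}(u_j^n,u_j^{n-1})$ together with the symmetry of $H_h^n$ in its arguments --- and both check out.
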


\begin{proof}
	The proof is similar to the theorem \ref{theorem-FIEP}, thus it is omitted.
\end{proof}

\section{The conservation of the quadratic invariant}
In order to develop an equivalent system with a quadratic energy functional, we introduce a new intermediate variable $v=u^2$ and rewrite the RLW equation \eqref{model-eq} into the following equivalent formulation:
\begin{align*}
&(1-\sigma\partial_x^2)u_t=-\partial_x\left(\dfrac{\gamma}{6}v+au+\dfrac{\gamma}{3}u^2\right),\\[0.3cm]
&(1-\sigma\partial_x^2)v_t=-2u\partial_x\left(\dfrac{\gamma}{6}v+au+\dfrac{\gamma}{3}u^2\right),
\end{align*}
where the corresponding quadratic energy conservation law
\begin{align*}
\mathcal{H}(t)=\int\left(\dfrac{\gamma}{6}uv+\dfrac{a}{2}u^2\right)dx\equiv \mathcal{H}(0).
\end{align*}
The so-called modified finite volume discretization in space of the system is based on the Galerkin formulation, thus we have
\begin{align*}
\begin{array}{ll}
\Big((1-\sigma\partial_x^2)(u_h)_t,w_1\Big)=-\left(\partial_x\big(\dfrac{\gamma}{6}v_h+au_h+\dfrac{\gamma}{3}u_h^2\big),w_1\right),\quad&\forall\;w_1\in V_h,\\[0.3cm]
\Big((1-\sigma\partial_x^2)(v_h)_t,w_2\Big)=-\left(2u_h\partial_x\big(\dfrac{\gamma}{6}v_h+au_h+\dfrac{\gamma}{3}u_h^2\big),w_2\right),\quad&\forall\;w_2\in V_h.
\end{array}
\end{align*}
The semi-discrete system is equivalent to the following ODE system
\begin{align}\label{new-semi-discrete-eq}
\begin{split}
&(\mathbb{A}-\sigma \mathbb{B})\dfrac{d}{dt}\mathbf{u}=-\mathbb{C}\left(\dfrac{\gamma}{6}\mathbf{v}+a\mathbf{u}\right)-\dfrac{\gamma}{3}\mathbb{C}(\mathbf{u}\odot\mathbf{u}),\\[0.3cm]
&(\mathbb{A}-\sigma\mathbb{B})\dfrac{d}{dt}\mathbf{v}=-2\mathbf{u}\odot \mathbb{C}\left(\dfrac{\gamma}{6}\mathbf{v}+a\mathbf{u}\right)-\dfrac{2}{3}\mathbf{u}\odot\mathbb{C}(\mathbf{u}\odot\mathbf{u}).
\end{split}
\end{align}

\subsection{Linear-implicity Crank-Nicolson (LICN) scheme for the RLW equation}
Applying the linear-implicit Crank-Nicolson scheme in temporal direction for the semi-discrete system \eqref{new-semi-discrete-eq}, we have
\begin{align}
&(\mathbb{A}-\sigma\mathbb{B})\delta_t^+\mathbf{u}^n=-\mathbb{C}\left(\dfrac{\gamma}{6}\mathbf{v}^{n+\frac{1}{2}}+a\mathbf{u}^{n+\frac{1}{2}}\right)
-\dfrac{\gamma}{3}\mathbb{C}\mathrm{diag}(\mathbf{\bar{u}}^{n+\frac{1}{2}})\mathbf{u}^{n+\frac{1}{2}},\label{linear-scheme-eq1}\\[0.3cm]
&(\mathbb{A}-\sigma\mathbb{B})\delta_t^+\mathbf{v}^n=-2\mathrm{diag}(\mathbf{\bar{u}}^{n+\frac{1}{2}})\mathbb{C}\left(\dfrac{\gamma}{6}\mathbf{v}^{n+\frac{1}{2}}
+a\mathbf{u}^{n+\frac{1}{2}}\right)-\dfrac{2}{3}\mathrm{diag(\mathbf{\bar u}^{n+\frac{1}{2}})}\mathbb{C}\mathrm{diag}(\mathbf{\bar u}^{n+\frac{1}{2}})\mathbf{u}^{n+\frac{1}{2}},\notag
\end{align}
where $n\geq 1$. Obviously, \eqref{linear-scheme-eq1} is also a three-level scheme which can not start by itself. Thus, the first step $\mathbf{u}^1$ can be computed by a proper two-level scheme as follows
\begin{align}\label{start-linear-scheme-eq}
\begin{split}
&(\mathbb{A}-\sigma\mathbb{B})\delta_t^+\mathbf{u}^0=-\mathbb{C}\left(\dfrac{\gamma}{6}\mathbf{v}^{\frac{1}{2}}+a\mathbf{u}^{\frac{1}{2}}\right)-\dfrac{\gamma}{3}\mathbb{C}\mathrm{diag}(\mathbf{u}^{0})\mathbf{u}^{\frac{1}{2}},\\[0.3cm]
&(\mathbb{A}-\sigma\mathbb{B})\delta_t^+\mathbf{v}^0=-2\mathrm{diag}(\mathbf{u}^{0})\mathbb{C}\left(\dfrac{\gamma}{6}\mathbf{v}^{\frac{1}{2}}+a\mathbf{u}^{\frac{1}{2}}\right)-\dfrac{2}{3}\mathrm{diag(\mathbf{u}^{0})}\mathbb{C}\mathrm{diag}(\mathbf{u}^{0})\mathbf{u}^{\frac{1}{2}},\\[0.3cm]
&\mathbf{v}^0=\mathbf{u}^0\odot\mathbf{u}^0.
\end{split}
\end{align}
\begin{theorem}
	The scheme \eqref{linear-scheme-eq1} and \eqref{start-linear-scheme-eq} satisfy the discrete
	energy conservation law
	\begin{align}\label{energy-conservation-law}
	\mathcal{H}_h^n=\mathcal{H}_h^0,\quad\forall\;n\geq 0,
	\end{align}
	where $\mathcal{H}_h^n=h\sum\limits_{j=0}^{N-1}\big(\dfrac{\gamma u^n_jv^n_j}{6}+\dfrac{a(u^n_j)^2}{2}\big)$.
\end{theorem}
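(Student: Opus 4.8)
The plan is to imitate the proof of Theorem~\ref{theorem-FIEP}, using that (as already noted there, and ultimately because the periodic boundary condition makes $\mathbb{A}$, $\mathbb{B}$, $\mathbb{C}$ circulant, so $\mathbb{A}-\sigma\mathbb{B}$ is symmetric and commutes with the skew-symmetric $\mathbb{C}$) the matrix $(\mathbb{A}-\sigma\mathbb{B})^{-1}\mathbb{C}$ is skew-symmetric with respect to $(\cdot,\cdot)_h$, hence $\big((\mathbb{A}-\sigma\mathbb{B})^{-1}\mathbb{C}\mathbf{z},\mathbf{z}\big)_h=0$ for all $\mathbf{z}\in X_h$. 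The whole reason for introducing $v=u^2$ is that the functional $\mathcal{H}_h^n=\frac{\gamma}{6}(\mathbf{u}^n,\mathbf{v}^n)_h+\frac a2\|\mathbf{u}^n\|_h^2$ is now \emph{quadratic} in $(\mathbf{u}^n,\mathbf{v}^n)$, so a midpoint evaluation reproduces its increment exactly; the task is then to write $\mathcal{H}_h^{n+1}-\mathcal{H}_h^n$ as $-\tau\big((\mathbb{A}-\sigma\mathbb{B})^{-1}\mathbb{C}\mathbf{z},\mathbf{z}\big)_h$ for a suitable $\mathbf{z}$.

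I would carry this out in five steps. (i) Abbreviate $\mathbf{w}^{n+\frac12}:=\frac{\gamma}{6}\mathbf{v}^{n+\frac12}+a\mathbf{u}^{n+\frac12}+\frac{\gamma}{3}\,\bar{\mathbf{u}}^{n+\frac12}\odot\mathbf{u}^{n+\frac12}$, so that the first equation of \eqref{linear-scheme-eq1} is $(\mathbb{A}-\sigma\mathbb{B})\delta_t^+\mathbf{u}^n=-\mathbb{C}\mathbf{w}^{n+\frac12}$; the second equation, being $2\,\mathrm{diag}(\bar{\mathbf{u}}^{n+\frac12})$ applied to the first, is the discrete counterpart of $v_t=2uu_t$, which I would use in the form $\delta_t^+\mathbf{v}^n=2\,\bar{\mathbf{u}}^{n+\frac12}\odot\delta_t^+\mathbf{u}^n$. (ii) With the elementary identities $(\mathbf{a}^{n+1},\mathbf{b}^{n+1})_h-(\mathbf{a}^n,\mathbf{b}^n)_h=\tau(\delta_t^+\mathbf{a}^n,\mathbf{b}^{n+\frac12})_h+\tau(\mathbf{a}^{n+\frac12},\delta_t^+\mathbf{b}^n)_h$ and $\|\mathbf{a}^{n+1}\|_h^2-\|\mathbf{a}^n\|_h^2=2\tau(\delta_t^+\mathbf{a}^n,\mathbf{a}^{n+\frac12})_h$, expand
\[
\mathcal{H}_h^{n+1}-\mathcal{H}_h^n=\tau\Big(\delta_t^+\mathbf{u}^n,\ \tfrac{\gamma}{6}\mathbf{v}^{n+\frac12}+a\mathbf{u}^{n+\frac12}\Big)_h+\frac{\gamma}{6}\tau\big(\mathbf{u}^{n+\frac12},\delta_t^+\mathbf{v}^n\big)_h .
\]
(iii) Replace $\delta_t^+\mathbf{v}^n$ using step (i); since $(\mathbf{a},\mathbf{b}\odot\mathbf{c})_h=(\mathbf{a}\odot\mathbf{b},\mathbf{c})_h$, the second term becomes $\frac{\gamma}{3}\tau\big(\delta_t^+\mathbf{u}^n,\bar{\mathbf{u}}^{n+\frac12}\odot\mathbf{u}^{n+\frac12}\big)_h$, exactly the summand that completes $\frac{\gamma}{6}\mathbf{v}^{n+\frac12}+a\mathbf{u}^{n+\frac12}$ to $\mathbf{w}^{n+\frac12}$, so $\mathcal{H}_h^{n+1}-\mathcal{H}_h^n=\tau\big(\delta_t^+\mathbf{u}^n,\mathbf{w}^{n+\frac12}\big)_h$. (iv) Insert $\delta_t^+\mathbf{u}^n=-(\mathbb{A}-\sigma\mathbb{B})^{-1}\mathbb{C}\mathbf{w}^{n+\frac12}$ and invoke the skew-symmetry to get $\mathcal{H}_h^{n+1}=\mathcal{H}_h^n$ for every $n\ge1$. (v) The case $n=0$ is identical: the start-up scheme \eqref{start-linear-scheme-eq} has the same structure with $\bar{\mathbf{u}}^{n+\frac12}$ replaced by $\mathbf{u}^0$ and with $\mathbf{v}^0=\mathbf{u}^0\odot\mathbf{u}^0$, so the same three lines give $\mathcal{H}_h^1=\mathcal{H}_h^0$; induction on $n$ then gives \eqref{energy-conservation-law}.

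The crux is steps (i)+(iii): one must argue that the auxiliary equation really is the ``$2u\cdot$'' multiple of the principal one and therefore encodes the discrete product rule $\delta_t^+\mathbf{v}^n=2\,\bar{\mathbf{u}}^{n+\frac12}\odot\delta_t^+\mathbf{u}^n$, because it is precisely this relation that lets the cubic contribution generated by $\mathcal{H}_h$ cancel against the $\mathbf{v}$-increment in the energy balance. One must also check that the extrapolation $\bar{\mathbf{u}}^{n+\frac12}=(3\mathbf{u}^n-\mathbf{u}^{n-1})/2$ enters both equations in the same way (and degenerates to $\mathbf{u}^0$ consistently at the first step), since otherwise this cancellation is destroyed; after (iii) everything is mechanical and rests only on the skew-symmetry of $(\mathbb{A}-\sigma\mathbb{B})^{-1}\mathbb{C}$, equivalently on the commutativity of the circulant matrices $\mathbb{A}$, $\mathbb{B}$, $\mathbb{C}$.
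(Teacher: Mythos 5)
Your steps (ii)--(iv) are mechanically sound: the polarization identities, the identity $(\mathbf{a},\mathbf{b}\odot\mathbf{c})_h=(\mathbf{a}\odot\mathbf{b},\mathbf{c})_h$, and the skew-symmetry of $(\mathbb{A}-\sigma\mathbb{B})^{-1}\mathbb{C}$ are all legitimate, and if the discrete chain rule of step (i) were available the argument would close in one line. The gap is exactly at step (i), which you yourself flag as the crux. What the second equation of \eqref{linear-scheme-eq1} actually says is
\begin{align*}
(\mathbb{A}-\sigma\mathbb{B})\,\delta_t^+\mathbf{v}^n=2\,\mathrm{diag}(\bar{\mathbf{u}}^{n+\frac12})\,(\mathbb{A}-\sigma\mathbb{B})\,\delta_t^+\mathbf{u}^n,
\end{align*}
not $\delta_t^+\mathbf{v}^n=2\,\bar{\mathbf{u}}^{n+\frac12}\odot\delta_t^+\mathbf{u}^n$. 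To pass from the former to the latter you must cancel the mass matrix $(\mathbb{A}-\sigma\mathbb{B})$ through $\mathrm{diag}(\bar{\mathbf{u}}^{n+\frac12})$, and a circulant matrix does not commute with a generic diagonal matrix. Equivalently, the scheme gives $\delta_t^+\mathbf{v}^n=-2(\mathbb{A}-\sigma\mathbb{B})^{-1}\mathrm{diag}(\bar{\mathbf{u}}^{n+\frac12})\mathbb{C}\mathbf{w}^{n+\frac12}$, whereas step (iii) needs $-2\,\mathrm{diag}(\bar{\mathbf{u}}^{n+\frac12})(\mathbb{A}-\sigma\mathbb{B})^{-1}\mathbb{C}\mathbf{w}^{n+\frac12}$; the discrepancy is the commutator $\big[(\mathbb{A}-\sigma\mathbb{B})^{-1},\mathrm{diag}(\bar{\mathbf{u}}^{n+\frac12})\big]$ applied to $\mathbb{C}\mathbf{w}^{n+\frac12}$, which is not zero, so your energy balance picks up an uncontrolled residual term and the chain of equalities breaks.

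For comparison, the paper's proof does not attempt a discrete chain rule at all: it tests the first equation with $\frac{\gamma}{6}\mathbf{v}^{n+\frac12}+a\mathbf{u}^{n+\frac12}$ and the second with $\frac{\gamma}{6}\mathbf{u}^{n+\frac12}$, annihilates the diagonal contributions by the skew-symmetry of $(\mathbb{A}-\sigma\mathbb{B})^{-1}\mathbb{C}$, and cancels the two surviving cross terms against each other via Lemma \ref{integ-by-parts-lem}. Be warned, however, that the same non-commutativity is the delicate point there too: the displayed cross-term cancellation silently moves $(\mathbb{A}-\sigma\mathbb{B})^{-1}$ past $\mathrm{diag}(\bar{\mathbf{u}}^{n+\frac12})$ and drops the $a$-cross terms and the $\mathrm{diag}\cdot\mathbb{C}\cdot\mathrm{diag}$ contribution, so it is not a template you can simply copy to repair step (i). To make your argument rigorous you must either (a) justify the update law $\delta_t^+\mathbf{v}^n=2\bar{\mathbf{u}}^{n+\frac12}\odot\delta_t^+\mathbf{u}^n$ from whatever form of the $v$-equation is actually intended (if the auxiliary variable is advanced without a mass matrix, your proof is complete as written), or (b) work with \eqref{linear-scheme-eq1} exactly as stated and account explicitly for the commutator terms, which your current write-up does not do.
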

\begin{proof}
	Taking the discrete inner product of the first equation of \eqref{linear-scheme-eq1} with $\frac{\gamma}{6}\mathbf{v}^{n+\frac{1}{2}}+a\mathbf{u}^{n+\frac{1}{2}}$, yields
	\begin{align}\label{quadratic-theorem-eq1}
	\left(\delta_t^+\mathbf{u}^n,\dfrac{\gamma}{6}\mathbf{v}^{n+\frac{1}{2}}+a\mathbf{u}^{n+\frac{1}{2}}\right)_h
	&=-\dfrac{\gamma^2}{18}\left((\mathbb{A}-\sigma\mathbb{B})^{-1}\mathbb{C}\mathrm{diag}(\bar{\mathbf{u}}^{n+\frac{1}{2}})\mathbf{u}^{n+\frac{1}{2}},\mathbf{v}^{n+\frac{1}{2}}\right)_h.
	\end{align}
	because of the antisymmetry of $(\mathbb{A}-\sigma\mathbb{B})^{-1}\mathbb{C}$.
	In the similar way, taking the discrete inner product of the second equation of \eqref{linear-scheme-eq1} with $\frac{\gamma}{6}\mathbf{u}^{n+\frac{1}{2}}$, we have
	\begin{align}\label{inner-produce-eq2}
	\left(\delta_t^+\mathbf{v}^n,\dfrac{\gamma}{6}\mathbf{u}^{n+\frac{1}{2}}\right)_h
	=-\dfrac{\gamma^2}{18}\left(\mathrm{diag}(\bar{\mathbf{u}}^{n+\frac{1}{2}})(\mathbb{A}-\sigma\mathbb{B})^{-1}\mathbb{C}\mathbf{v}^{n+\frac{1}{2}},\mathbf{u}^{n+\frac{1}{2}}\right)_h
	\end{align}
	due to the antisymmetry of $\mathbb{C}$.
	Using Lemma \ref{integ-by-parts-lem}, we have
	\begin{align*}
	\left((\mathbb{A}-\sigma\mathbb{B})^{-1}\mathbb{C}\mathrm{diag}(\bar{\mathbf{u}}^{n+\frac{1}{2}})\mathbf{u}^{n+\frac{1}{2}},\mathbf{v}^{n+\frac{1}{2}}\right)_h
	+\left(\mathrm{diag}(\bar{\mathbf{u}}^{n+\frac{1}{2}})(\mathbb{A}-\sigma\mathbb{B})^{-1}\mathbb{C}\mathbf{v}^{n+\frac{1}{2}},\mathbf{u}^{n+\frac{1}{2}}\right)_h=0.
	\end{align*}
	So, adding \eqref{quadratic-theorem-eq1} and \eqref{inner-produce-eq2}, we obtain
	\begin{align}
	\left(\delta_t^+\mathbf{u}^n,\dfrac{\gamma}{6}\mathbf{v}^{n+\frac{1}{2}}+a\mathbf{u}^{n+\frac{1}{2}}\right)_h+\left(\delta_t^+\mathbf{v}^n,\dfrac{\gamma}{6}\mathbf{u}^{n+\frac{1}{2}}\right)_h=0,
	\end{align}
	i.e.,
	\begin{align*}
	&\dfrac{\gamma}{12\tau}\left((\mathbf{u}^{n+1})^T\mathbf{v}^{n+1}-(\mathbf{u}^{n})^T\mathbf{v}^{n}+(\mathbf{u}^{n+1})^T\mathbf{v}^n-(\mathbf{u}^n)^T\mathbf{v}^{n+1}\right)
	+\dfrac{a}{2\tau}\left((\mathbf{u}^{n+1})^T\mathbf{u}^{n+1}-(\mathbf{u}^n)^T\mathbf{u}^n\right)\\
	&+\dfrac{\gamma}{12\tau}\left((\mathbf{u}^{n+1})^T\mathbf{v}^{n+1}-(\mathbf{u}^{n})^T\mathbf{v}^{n}+(\mathbf{u}^{n})^T\mathbf{v}^{n+1}-(\mathbf{u}^{n+1})^T\mathbf{v}^{n}\right)=0.
	\end{align*}
	Then, we have
	\begin{align*}
	\dfrac{\gamma}{6}(\mathbf{u}^{n+1})^T\mathbf{v}^{n+1}+\dfrac{a}{2}(\mathbf{u}^{n+1})^T\mathbf{u}^{n+1}-\dfrac{\gamma}{6}(\mathbf{u}^{n})^T\mathbf{v}^{n}+\dfrac{a}{2}(\mathbf{u}^{n})^T\mathbf{u}^{n}=0,
	\end{align*}
	which implies
	\begin{align}\label{theorem-result-eq1}
	\mathcal{H}_h^{n+1}=\mathcal{H}_h^n,\quad\forall\;n\geq 1.
	\end{align}
	Similarly, we adopt the same technique to \eqref{start-linear-scheme-eq} and obtain
	\begin{align}\label{theorem-result-eq2}
	\mathcal{H}_h^{1}=\mathcal{H}_h^0.
	\end{align}
	Combining \eqref{theorem-result-eq1} with \eqref{theorem-result-eq2} leads to \eqref{energy-conservation-law} and concludes the proof.
\end{proof}

\subsection{Linear-implicit leap-frog (LILF) scheme for the RLW equation}
Applying the linear implicit leap-frog scheme in time for \eqref{new-semi-discrete-eq}, we obtain
\begin{align}\label{linear-scheme-eq2}
\begin{split}
&(\mathbb{A}-\sigma\mathbb{B})\delta_t\mathbf{u}^n=-\mathbb{C}\left(\dfrac{\gamma}{6}A_t\mathbf{v}^{n}+aA_t\mathbf{u}^{n}\right)-\dfrac{\gamma}{3}\mathbb{C}\mathrm{diag}(\mathbf{u}^{n})A_t\mathbf{u}^{n},\\[0.3cm]
&(\mathbb{A}-\sigma\mathbb{B})\delta_t\mathbf{v}^n=-2\mathrm{diag}(\mathbf{u}^{n})\mathbb{C}\left(\dfrac{\gamma}{6}A_t\mathbf{v}^{n}+aA_t\mathbf{u}^{n}\right)-\dfrac{2}{3}\mathrm{diag(\mathbf{u}^{n})}\mathbb{C}\mathrm{diag}(\mathbf{u}^{n})A_t\mathbf{u}^{n},
\end{split}
\end{align}
where the starting value $\mathbf{u}^1$ and $\mathbf{v}^1$  are still computed by \eqref{start-linear-scheme-eq}. 
\begin{theorem}
	The scheme \eqref{linear-scheme-eq2} and \eqref{start-linear-scheme-eq} satisfy the discrete
	energy conservation law
	\begin{align}\label{energy-conservation-law2}
	\mathcal{H}_h^n=\mathcal{H}_h^0,\quad\forall\;n\geq 0,
	\end{align}
	where
	$\mathcal{H}_h^n=h\sum\limits_{j=0}^{N-1}\big(\dfrac{\gamma u^n_jv^n_j}{6}+\dfrac{a(u^n_j)^2}{2}\big).$
\end{theorem}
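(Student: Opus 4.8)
The plan is to replay, essentially line for line, the proof just given for the LICN scheme, with the Crank--Nicolson ingredients replaced by their leap-frog analogues: the forward quotient $\delta_t^+$ by the centred quotient $\delta_t$, the averages $\mathbf{u}^{n+\frac12}$, $\mathbf{v}^{n+\frac12}$ by $A_t\mathbf{u}^n$, $A_t\mathbf{v}^n$, and the extrapolated weight $\bar{\mathbf{u}}^{n+\frac12}$ by the pointwise value $\mathbf{u}^n$. For $n\geq 1$ I would take the discrete inner product of the first equation of \eqref{linear-scheme-eq2} with $\frac{\gamma}{6}A_t\mathbf{v}^n+aA_t\mathbf{u}^n$ and of the second equation with $\frac{\gamma}{6}A_t\mathbf{u}^n$, and then add the two identities so produced.

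On the right-hand side, the quadratic forms in which $(\mathbb{A}-\sigma\mathbb{B})^{-1}\mathbb{C}$ acts on a vector and is tested against that same vector vanish because $(\mathbb{A}-\sigma\mathbb{B})^{-1}\mathbb{C}$ is skew-symmetric, and the antisymmetry of $\mathbb{C}$ removes the surviving contributions of the $a$-terms; what remains is a single cubic term from each equation, one of the form $-\frac{\gamma^2}{18}\bigl((\mathbb{A}-\sigma\mathbb{B})^{-1}\mathbb{C}\,\mathrm{diag}(\mathbf{u}^n)A_t\mathbf{u}^n,\,A_t\mathbf{v}^n\bigr)_h$ and a companion term from the other equation. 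Invoking Lemma~\ref{integ-by-parts-lem} to shift the diagonal (hence symmetric) matrix $\mathrm{diag}(\mathbf{u}^n)$ and the skew factor $(\mathbb{A}-\sigma\mathbb{B})^{-1}\mathbb{C}$ across the inner product shows that these two cubic terms are exact negatives of one another, so the two right-hand sides sum to zero --- precisely the cancellation used in the LICN proof, with $\mathrm{diag}(\mathbf{u}^n)$ now playing the role of $\mathrm{diag}(\bar{\mathbf{u}}^{n+\frac12})$.

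For the left-hand side, using $\delta_t\mathbf{w}^n=(\mathbf{w}^{n+1}-\mathbf{w}^{n-1})/(2\tau)$ and $A_t\mathbf{w}^n=(\mathbf{w}^{n+1}+\mathbf{w}^{n-1})/2$, one gets $\bigl(\delta_t\mathbf{u}^n,\,aA_t\mathbf{u}^n\bigr)_h=\frac{a}{4\tau}\bigl(\|\mathbf{u}^{n+1}\|_h^2-\|\mathbf{u}^{n-1}\|_h^2\bigr)$, and, after expanding the coupling terms and cancelling the mixed scalars $(\mathbf{u}^{n+1})^T\mathbf{v}^{n-1}$ and $(\mathbf{u}^{n-1})^T\mathbf{v}^{n+1}$ against their transposes, $\bigl(\delta_t\mathbf{u}^n,\frac{\gamma}{6}A_t\mathbf{v}^n\bigr)_h+\bigl(\delta_t\mathbf{v}^n,\frac{\gamma}{6}A_t\mathbf{u}^n\bigr)_h=\frac{\gamma h}{12\tau}\bigl((\mathbf{u}^{n+1})^T\mathbf{v}^{n+1}-(\mathbf{u}^{n-1})^T\mathbf{v}^{n-1}\bigr)$. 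Collecting these, the summed identity is exactly $\frac{1}{2\tau}\bigl(\mathcal{H}_h^{n+1}-\mathcal{H}_h^{n-1}\bigr)=0$, that is, $\mathcal{H}_h^{n+1}=\mathcal{H}_h^{n-1}$ for every $n\geq 1$.

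Since the leap-frog recursion links only time levels of the same parity, I would conclude by treating the even subsequence $\mathcal{H}_h^0=\mathcal{H}_h^2=\mathcal{H}_h^4=\cdots$ and the odd subsequence $\mathcal{H}_h^1=\mathcal{H}_h^3=\cdots$ separately, and then invoking the starting step: \eqref{start-linear-scheme-eq} is precisely the two-level step used to launch the scheme, and \eqref{theorem-result-eq2} has already established $\mathcal{H}_h^1=\mathcal{H}_h^0$, which ties the two subsequences together and yields $\mathcal{H}_h^n=\mathcal{H}_h^0$ for all $n\geq 0$. I expect the genuinely delicate part to be the middle step --- arranging the two cubic quadratic forms so that Lemma~\ref{integ-by-parts-lem} makes them annihilate, and carefully tracking the mixed $\mathbf{u}$--$\mathbf{v}$ scalars on the left so that the telescoping collapses cleanly onto $\mathcal{H}_h^{n+1}-\mathcal{H}_h^{n-1}$; the rest transcribes verbatim from the LICN argument.
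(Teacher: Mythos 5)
Your overall plan is exactly the paper's: the authors omit this proof with a pointer to the LICN argument, and your substitutions $\delta_t^+\to\delta_t$, $\mathbf{w}^{n+\frac12}\to A_t\mathbf{w}^n$, $\bar{\mathbf{u}}^{n+\frac12}\to\mathbf{u}^n$ are the right ones. Your left-hand-side telescoping onto $\frac{1}{2\tau}\bigl(\mathcal{H}_h^{n+1}-\mathcal{H}_h^{n-1}\bigr)$ and the even/odd-parity patch via $\mathcal{H}_h^1=\mathcal{H}_h^0$ are both correct, and indeed handled more carefully than in the paper. The gap is in the step you yourself single out as delicate: the cancellation of the right-hand sides. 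Write $S=(\mathbb{A}-\sigma\mathbb{B})^{-1}$ and $D=\mathrm{diag}(\mathbf{u}^n)$. The two cubic cross terms you must annihilate are $T_1=\bigl(S\mathbb{C}D\,A_t\mathbf{u}^n,\,A_t\mathbf{v}^n\bigr)_h$ and $T_2=\bigl(SD\mathbb{C}\,A_t\mathbf{v}^n,\,A_t\mathbf{u}^n\bigr)_h$; note that in $T_2$ the diagonal weight sits \emph{between} $S$ and $\mathbb{C}$, because the scheme applies $\mathrm{diag}(\mathbf{u}^n)$ before the mass matrix is inverted, so the "skew factor $S\mathbb{C}$'' is not available as a block there. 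Moving everything onto the first slot with Lemma \ref{integ-by-parts-lem} gives $T_1=(\mathbb{C}SD\,A_t\mathbf{u}^n,A_t\mathbf{v}^n)_h$ (since $S$ and $\mathbb{C}$, both circulant, commute) but $T_2=-(\mathbb{C}DS\,A_t\mathbf{u}^n,A_t\mathbf{v}^n)_h$, hence $T_1+T_2=(\mathbb{C}(SD-DS)A_t\mathbf{u}^n,A_t\mathbf{v}^n)_h$, which vanishes only if $S$ commutes with $D$ --- and a non-diagonal circulant does not commute with a generic diagonal matrix. The same obstruction defeats the claim that "the antisymmetry of $\mathbb{C}$ removes the $a$-terms'' (the matrices $S\mathbb{C}D$ and $SD\mathbb{C}$ are not skew once $S$ is interposed), and you never account for the remaining term $\bigl(SD\mathbb{C}D\,A_t\mathbf{u}^n,A_t\mathbf{u}^n\bigr)_h$, which likewise fails to be a skew quadratic form.

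To be fair, this defect is inherited: the paper's own LICN proof silently replaces $SD\mathbb{C}$ by $DS\mathbb{C}$ in \eqref{inner-produce-eq2}, which is exactly the illegal commutation above. What the argument does prove cleanly --- if you test the two equations \emph{without} inverting $\mathbb{A}-\sigma\mathbb{B}$ --- is conservation of the mass-matrix-weighted functional $\frac{\gamma}{6}\bigl((\mathbb{A}-\sigma\mathbb{B})\mathbf{u}^n,\mathbf{v}^n\bigr)_h+\frac{a}{2}\bigl((\mathbb{A}-\sigma\mathbb{B})\mathbf{u}^n,\mathbf{u}^n\bigr)_h$: there all the needed identities ($\mathbb{C}$ skew, $D\mathbb{C}D$ skew, $(\mathbb{C}D\mathbf{q},\mathbf{p})_h+(D\mathbb{C}\mathbf{p},\mathbf{q})_h=0$) hold verbatim. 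So either the conserved quantity in the statement should be this weighted one, or the $v$-equation of the scheme should place $\mathrm{diag}(\mathbf{u}^n)$ outside $(\mathbb{A}-\sigma\mathbb{B})^{-1}\mathbb{C}$; as written, your proof does not establish conservation of $h\sum_j\bigl(\frac{\gamma}{6}u_j^nv_j^n+\frac{a}{2}(u_j^n)^2\bigr)$, and you should either repair the scheme/functional or verify the cancellation explicitly rather than transcribing it.
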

\begin{proof}
	The proof of this theorem is similar to the previous one and thus it is omitted.
\end{proof}

\section{Numerical experiments}
As discussed above, conservation laws play an important role in migration of the solitary. Therefore, we pay attention to the efficiency and  conservative properties of our proposed new schemes.
\noindent
The numerical error in $L^2$ and $L^{\infty}$ norms are defined by
\begin{align*}
\|E_u\|_{0,h}=\left(h\sum_{j=0}^{N-1}|u(x_j,t_n)-u_j^n|^2\right)^{1/2},\quad
\|E_u\|_{\infty}=\max_{0\leq j\leq N-1}|u(x_j,t_n)-u_j^n|.
\end{align*}
The convergence order is calculated with the formula
\begin{align*}
\mathrm{Order}=\dfrac{\log(error_1/error_2)}{\log(\delta_1/\delta_2)},
\end{align*}
where $\delta_j,\ error_j\ (j=1,2)$ are step size and the corresponding error with step size $\delta_j$, respectively.
In order to show the preservation of invariants at $n$-th time level, we test the changes in mass and energy by $\mathcal{M}_h^n-\mathcal{M}_h^0$ and $\mathcal{H}_h^n-\mathcal{H}_h^0$, respectively.

\noindent
$\mathbf{Example\; 1}$ (Motion of a single solitary wave)
The RLW equation has an analytic solution of the form
\begin{align*}
u(x,t)=3c\mathrm{sech}^2(m(x-(\gamma c+a)t-x_0)),
\end{align*}
which corresponds to the motion of a single solitary wave with amplitude $3c$, initial center at $x_0$, the wave velocity $v=a+\gamma c$ and width $m=\sqrt{\gamma c/(v\sigma)}/2$. For this problem, the theoretical values of the invariants are
\begin{align*}
M=\dfrac{6c}{m},\quad H=\dfrac{6c^2}{m}+\dfrac{24c^3}{5m},
\end{align*}
which correspond to the mass and energy \cite{ZakiSI2001}, respectively. Next, we solve
the RLW equation with initial condition
\begin{align*}
u(x,0)=3c\mathrm{sech}^2(mx).
\end{align*}
All computations are done with the parameters $x_0=0$, $a=1$, $\sigma=1$ and $\gamma=1$.

First, the proposed schemes are performed with $x\in[-60,200]$, $c=1/3$ at $T=75$. Fig. \ref{fig:error-momentum} shows the errors in mass and energy by the four proposed schemes FIEP, LIEP, LICN and LILF. The errors in mass and energy are all conserved up to roundoff error, supporting our theoretical results.

Second, we test the convergence order on the solution numerically and display the computational efficiency by considering the initial condition with $c=1$ and
the run of the algorithm is continued up to time $T=1$
over the problem domain $[-40,60]$. The accuracies of numerical solution are given in Fig. \ref{fig:accuracy-numrical-solution}, where a second order convergence can be explicitly observed. The CPU costs of the four schemes are presented in Fig. \ref{fig:CPU-time}, which shows that the schemes LIEP, LICN and LILF are more efficient than FIEP scheme.

Finally, we investigate the numerical values in mass and energy and the $L^2$, $L^{\infty}$ error norms of numerical solutions are given in Table \ref{tab:FIEP}--\ref{tab:LILF}. As it is seen from these tables, we find the numerical values of mass and energy coincide with  their analytical values. Actually, the mass and energy respectively remain almost constant and the error norms in $L^2$ and $L^{\infty}$ are satisfactorily small. In addition, we compare the errors of numerical solutions in $L^2$ and $L^{\infty}$ and the calculation time among the proposed four schemes and some existed schemes \cite{CaiHong2017, Caijiaxiang2011, Dogan2002} in Table \ref{tab:Numer-Compari}. On the one hand, our proposed implicit scheme FIEP is more efficient and effective than the existed schemes NC-II \cite{CaiHong2017} and AMC-CN \cite{Caijiaxiang2011}. On the other hand, our proposed linear schemes LIEP, LICN and LILF all show higher precision solutions than the existed scheme Linear-CN \cite{Dogan2002}.  In a word, our schemes are efficient and reliable.

\begin{figure}[!htbp]
	\centering
	\subfigure{
		\includegraphics[width=0.35\textwidth,height=0.35\textwidth]{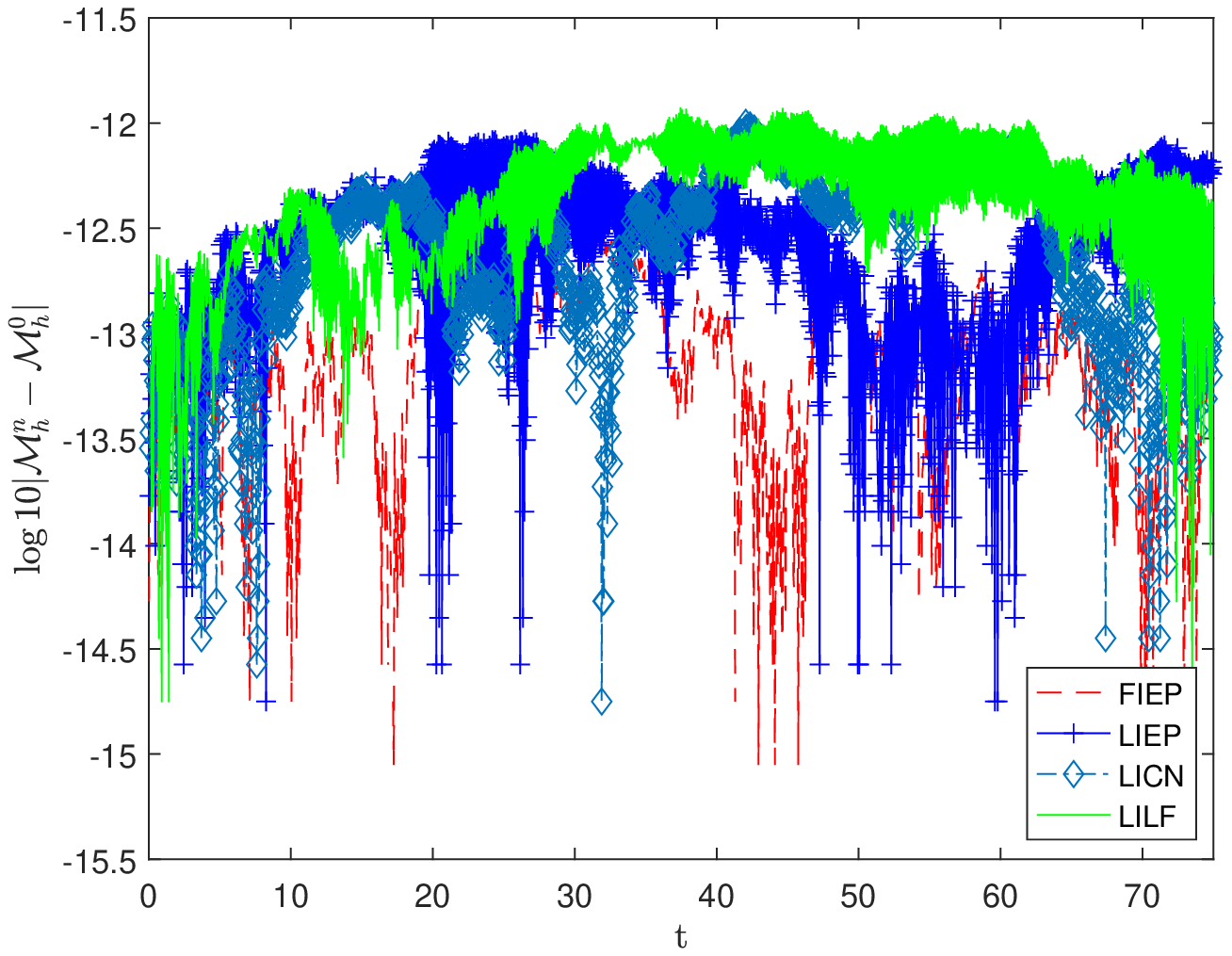}}
	\subfigure{
		\includegraphics[width=0.35\textwidth,height=0.35\textwidth]{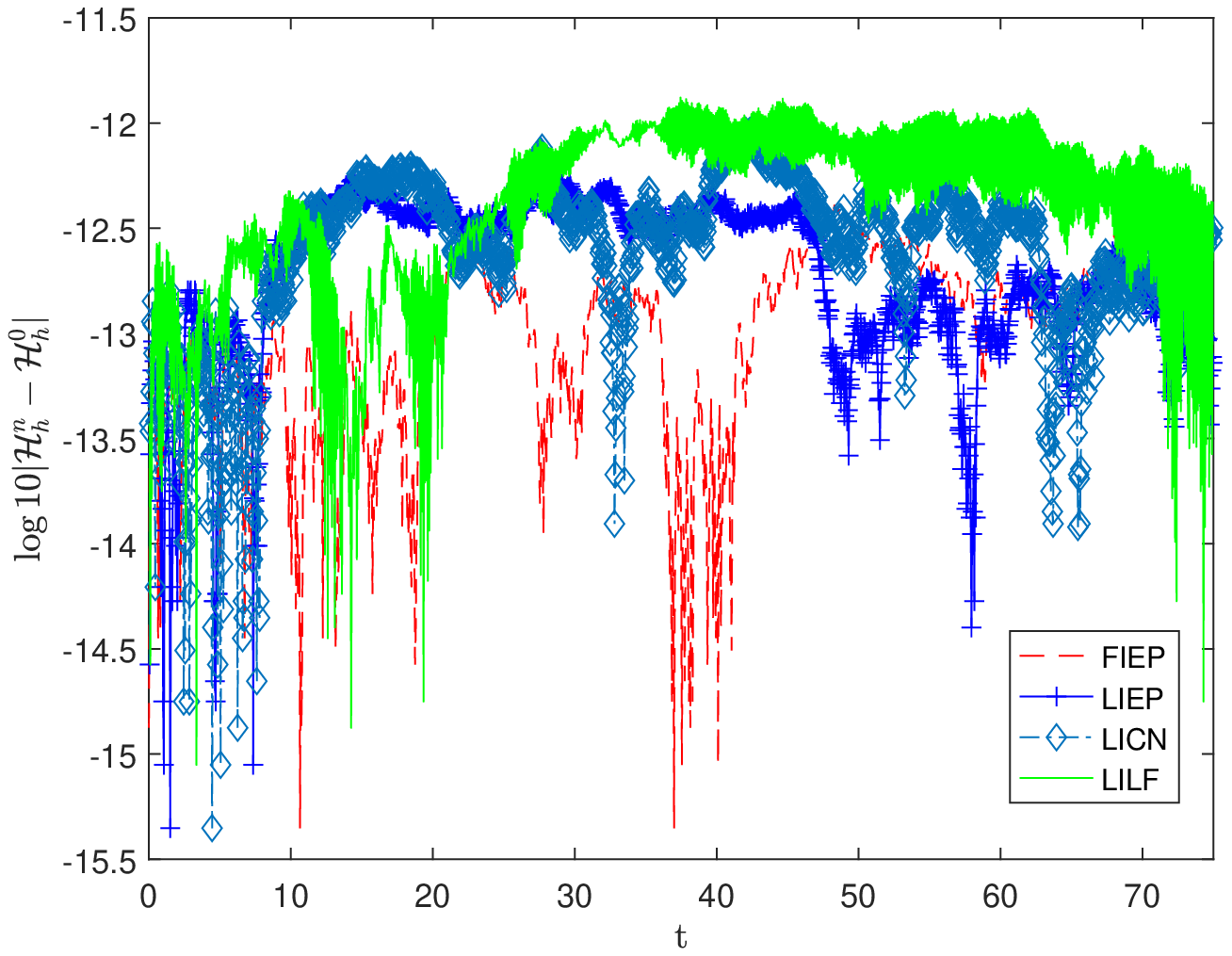}}
	\caption{\small The errors in mass (left) and energy (right) of the four schemes with $c=1/3$, $\tau=0.05$, $h=0.1$ and $x\in[-60,200]$ until $T=75$.\label{fig:error-momentum}}
\end{figure}

\begin{figure}[!htbp]
	\centering
	\subfigure{
		\includegraphics[width=0.35\textwidth,height=0.35\textwidth]{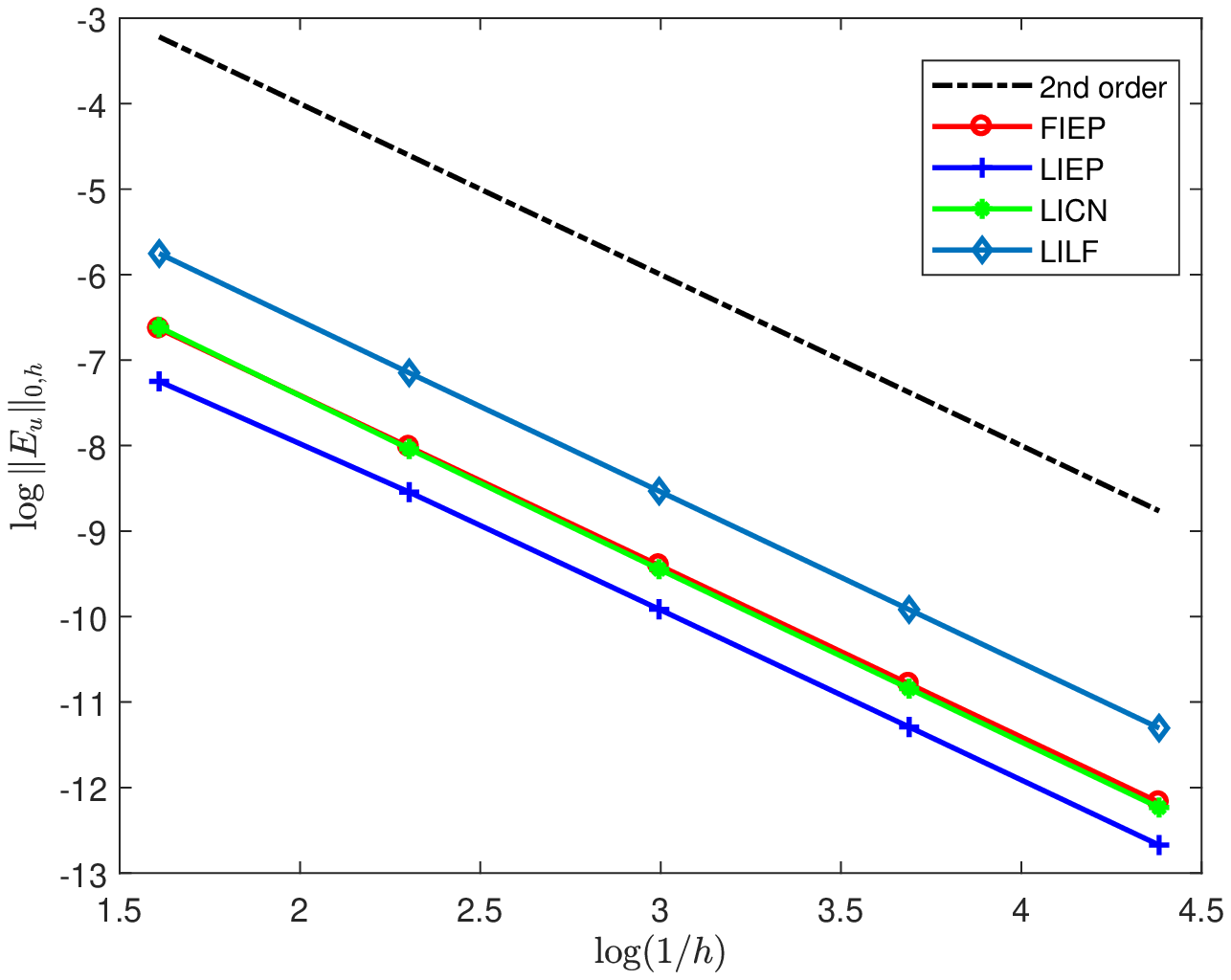}}
	\subfigure{
		\includegraphics[width=0.35\textwidth,height=0.35\textwidth]{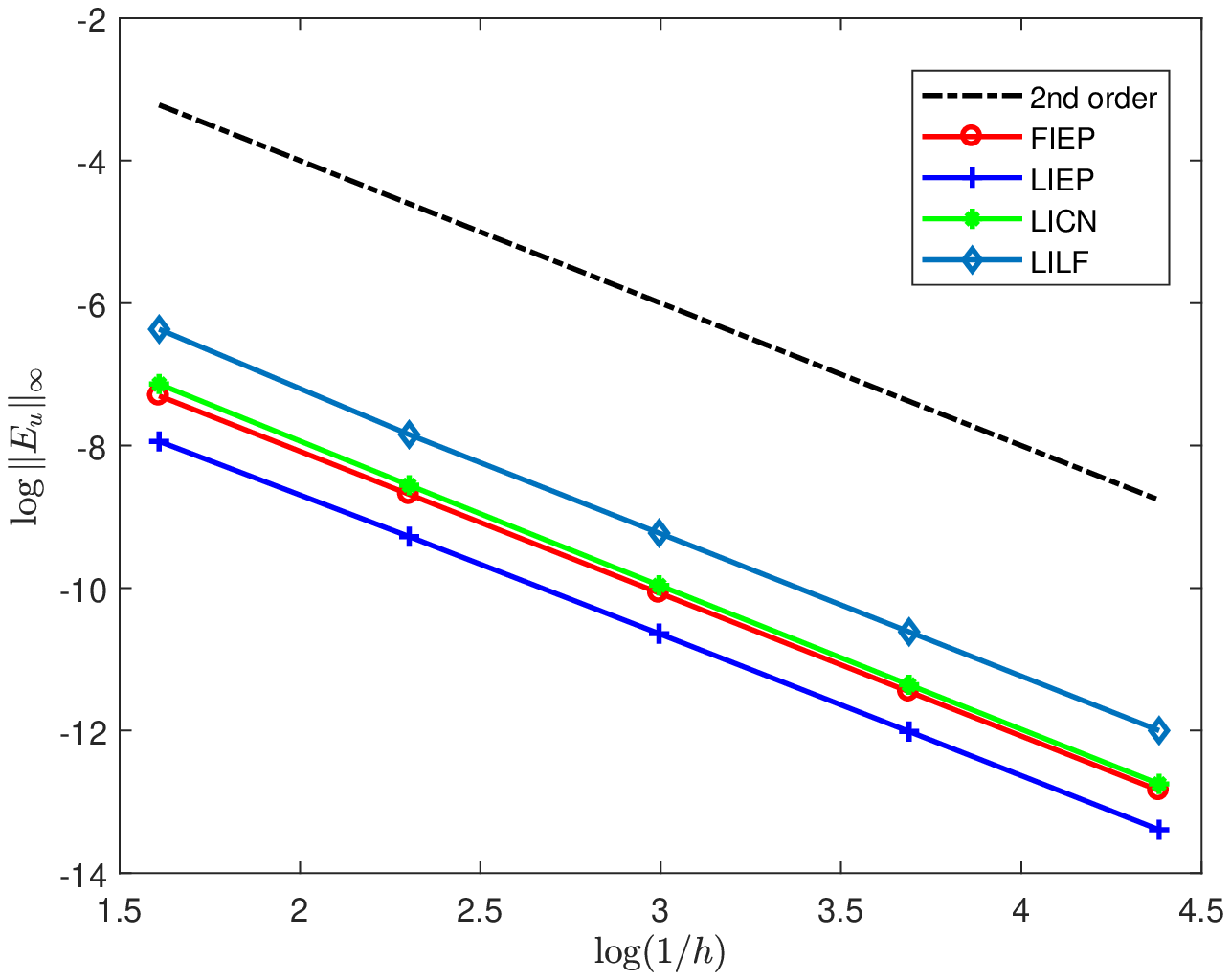}}
	\caption{\small The accuracy of numerical solutions in $L^2$ and $L^{\infty}$  errors of the four schemes with mesh size $\tau=h$\label{fig:accuracy-numrical-solution} }
\end{figure}

\begin{figure}[!htbp]
	\centering
	\subfigure{
		\includegraphics[width=0.35\textwidth,height=0.35\textwidth]{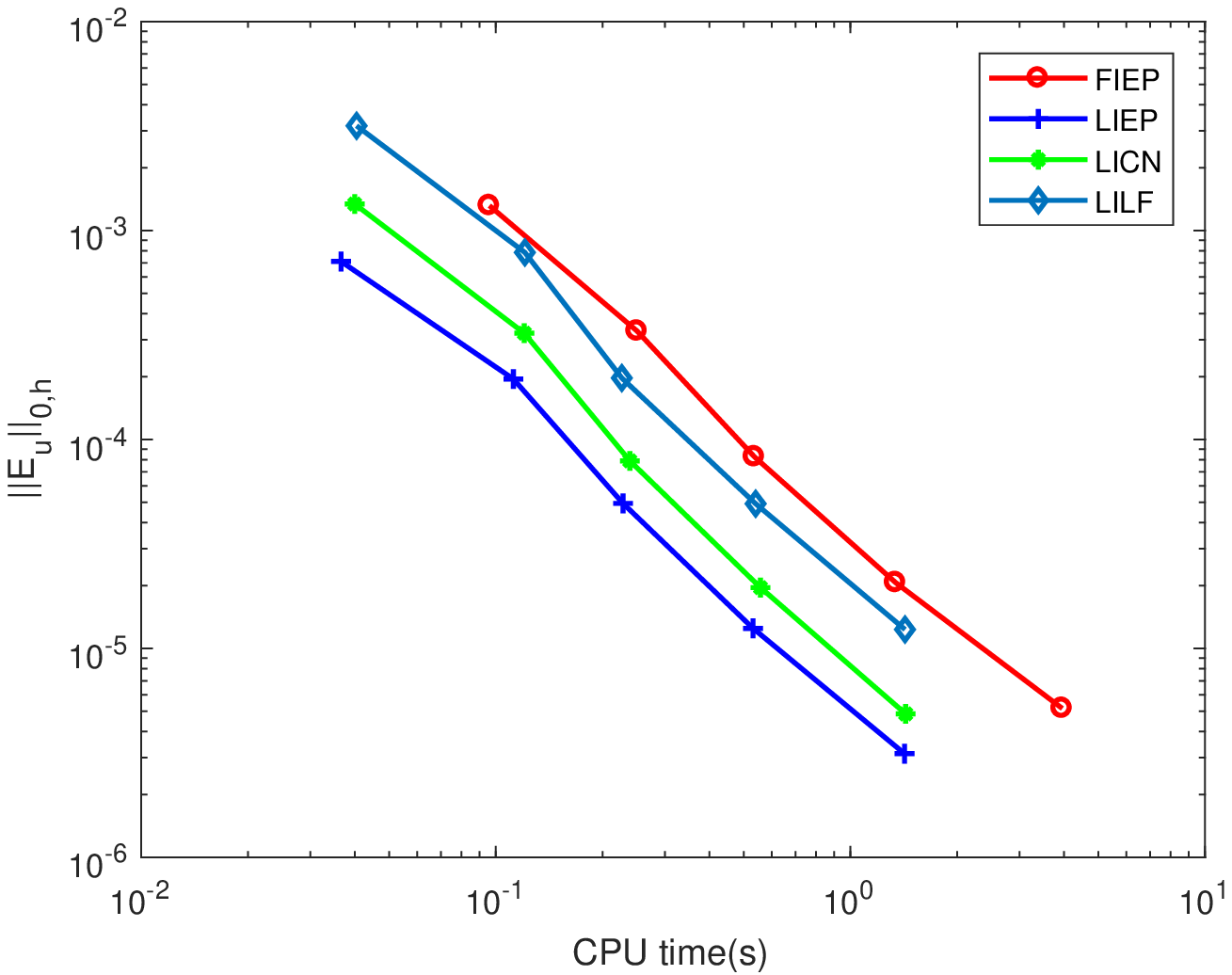}}
	\subfigure{
		\includegraphics[width=0.35\textwidth,height=0.35\textwidth]{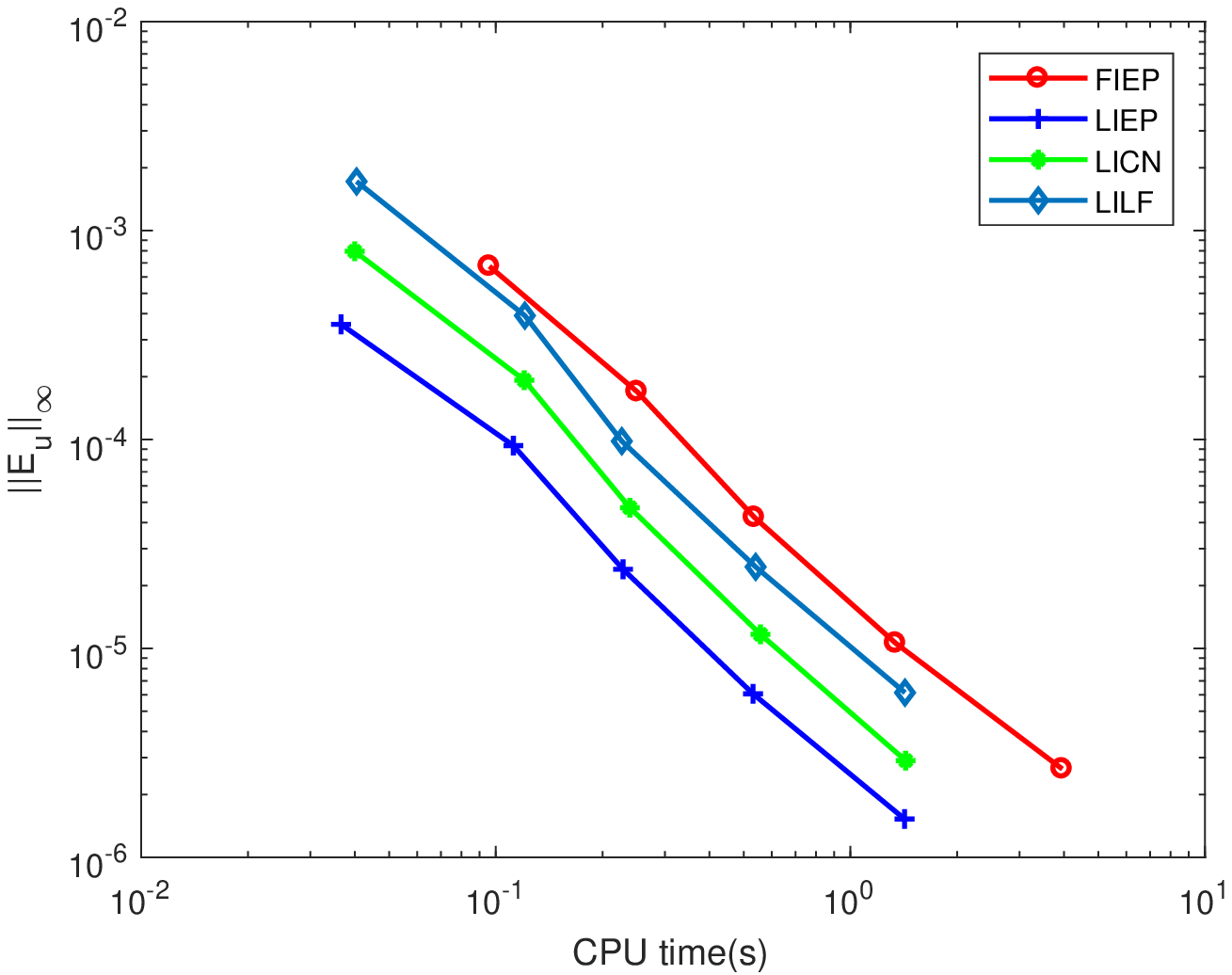}}
	\caption{\small Comparison of $L^2$ and $L^{\infty}$ errors in numerical solutions and CPU time(s) at $T=1$, where $c=1/3$ and $x\in[-40,60]$.\label{fig:CPU-time} }
\end{figure}

\begin{table}[!htbp]
	{\caption{ The invariants and errors of numerical solutions for the scheme FIEP  with $c=0.1$, $\tau=0.1$, $h=0.125$ in $[-40,60]$.}\label{tab:FIEP}}
	\begin{center}
		\begin{tabular}{c c c c c}\hline
			\small Time  &$M$       &$H$      &$L^2$ error       &$L^{\infty}$ error   \\\hline
			\small Analytical  &$3.97995$       &$0.42983$      &--       &--   \\ \hline
			0  &3.97993       &0.42983       &--       & --    \\
			4  &3.97993       &0.42983     &8.291e-5   & 3.357e-5 \\
			8  &3.97993  &0.42983     &1.633e-4  &6.721e-5      \\
			12 &3.97993  &0.42983     &2.404e-4 &9.791e-5           \\
			16&3.97993  &0.42983    &3.138e-4  &1.255e-4       \\
			\hline
		\end{tabular}
	\end{center}
\end{table}

\begin{table}[!htbp]
	{\caption{ The invariants and errors of numerical solutions for the scheme LIEP  with $c=0.1$, $\tau=0.1$, $h=0.125$ in $[-40,60]$.}\label{tab:LIEP}}
	\begin{center}
		\begin{tabular}{c c c c c}\hline
			\small Time  &$M$       &$H$      &$L^2$ error       &$L^{\infty}$ error   \\\hline
			\small Analytical  &$3.97995$   &$0.42983$      &--       &--   \\ \hline
			0  &3.97993  &0.42979  &--      & --    \\
			4  &3.97993  &0.42979  &4.020e-5 &1.455e-5      \\
			8  &3.97993  &0.42979  &8.265e-5 &3.124e-5      \\
			12 &3.97993  &0.42979  &1.224e-4 &4.673e-5       \\
			16 &3.97993  &0.42979  &1.614e-4 &6.131e-5       \\
			\hline
		\end{tabular}
	\end{center}
\end{table}\label{Tab:table-2}

\begin{table}[!htbp]
	{\caption{ The invariants and errors of numerical solutions for the scheme LICN  with $c=0.1$, $\tau=0.1$, $h=0.125$ in $[-40,60]$.}\label{tab:LICN}}
	\begin{center}
		\begin{tabular}{c c c c c}\hline
			\small Time  &$M$       &$H$      &$L^2$ error       &$L^{\infty}$ error   \\\hline
			\small Analytical  &$3.97995$   &$0.42983$      &--       &--   \\ \hline
			0  &3.97993  &0.42983  &--      & --    \\
			4  &3.97993  &0.42983  &6.485e-5 &2.651e-5      \\
			8  &3.97993  &0.42983  &1.270e-4 &5.174e-5      \\
			12 &3.97993  &0.42983  &1.854e-4 &7.413e-5      \\
			16 &3.97993  &0.42983  &2.416e-4 &9.502e-5      \\
			\hline
		\end{tabular}
	\end{center}
\end{table}

\begin{table}[!htbp]
	{\caption{ The invariants and errors of numerical solutions for the scheme LILF  with $c=0.1$, $\tau=0.1$, $h=0.125$ in $[-40,60]$.}\label{tab:LILF}}
	\begin{center}
		\begin{tabular}{c c c c c}\hline
			\small Time  &$M$       &$H$      &$L^2$ error       &$L^{\infty}$ error   \\\hline
			\small Analytical  &$3.97995$   &$0.42983$      &--       &--   \\ \hline
			0  &3.97993  &0.42983  &--      & --    \\
			4  &3.97993  &0.42983  &1.998e-4 &7.882e-5      \\
			8  &3.97993  &0.42983  &3.936e-4 &1.574e-4      \\
			12 &3.97993  &0.42983  &5.842e-4 &2.332e-4       \\
			16 &3.97993  &0.42983  &7.671e-4 &3.021e-4      \\
			\hline
		\end{tabular}
	\end{center}
\end{table}

\begin{table}[!htbp]
	{\caption{Numerical comparison at $T=10$ with $c=0.1$, $\tau=0.1$ and $-40\leq x\leq 60$.}\label{tab:Numer-Compari}}
	\begin{center}
		\begin{tabular}{l l l l l l l}\hline
			\multirow{2}{*}{Method} &\multicolumn{3}{c}{$h=0.125$} &\multicolumn{3}{c}{$h=0.0625$}\\ \cmidrule {2-4} \cmidrule{5-7}
			&$L^2$ error       &$L^{\infty}$ error &CPU(s)  &$L^2$ error       &$L^{\infty}$ error  &CPU(s) \\ \hline
			FIEP&2.023e-4&8.298e-5&1.398&1.363e-4&5.520e-5&1.796      \\
			LIEP&1.035e-4&3.946e-5&0.993&1.687e-4&6.716e-5&1.268         \\
			LICN&1.566e-4&6.316e-5&1.073&9.172e-5&3.545e-5&1.076         \\
			LILF&4.896e-4&1.962e-4&1.006&4.241e-4&1.684e-4&1.128      \\
			NC-II \cite{CaiHong2017}   &2.088e-4&7.532e-5&1.755&1.234e-4&4.236e-5&2.137 \\
			AMC-CN \cite{Caijiaxiang2011}    &3.944e-4&1.581e-4&1.506&1.828e-4&7.307e-5&1.988\\
			Linear-CN \cite{Dogan2002} &2.648e-4&1.088e-4&1.046&7.945e-4&2.957e-4&1.108    \\
			\hline
		\end{tabular}
	\end{center}
\end{table}

\noindent
$\mathbf{Example\; 2}$ (Interaction of three positive solitary waves) This example shows the interaction of three solitary waves with different amplitudes and travelling in the same direction for the RLW equation with parameters $\gamma=1$, $\sigma=1$ and $a=1$. We consider the initial condition $u(x,0)=\sum_{i=1}^33c_i\mathrm{sech}^2(m_i(x-x_i))$, where $-200\leq x\leq400$, $c_1=1$, $c_2=0.5$, $c_3=0.25$, $x_1=-20$, $x_2=15$, $x_3=45$ and $m_i=\frac{1}{2}\sqrt{\frac{\gamma c_i}{(1+\gamma c_i)\sigma}}$. The simulation is performed with $h=0.25$ and $\tau=0.05$ until $T=400$. Due to the space restrictions, we just display the interaction of three solitary waves as time evolves using the scheme LICN. Fig. \ref{fig:inter-LICN} shows the process of interaction of three positive solitary waves as time evolves. It is clear that the three solitary waves travel forward, then interact, and finally depart without
any changes in their own shapes. We also plot the errors in mass and energy in Fig. \ref{fig:mass-energy}. These results show that the errors of mass and energy computed by the schemes LIEP, LICN, LILF are conserved up to roundoff error very well than FIEP throughout the interaction simulation.
\begin{figure}[!htbp]
	\centering
	\subfigure{
		\includegraphics[width=0.3\textwidth,height=0.3\textwidth]{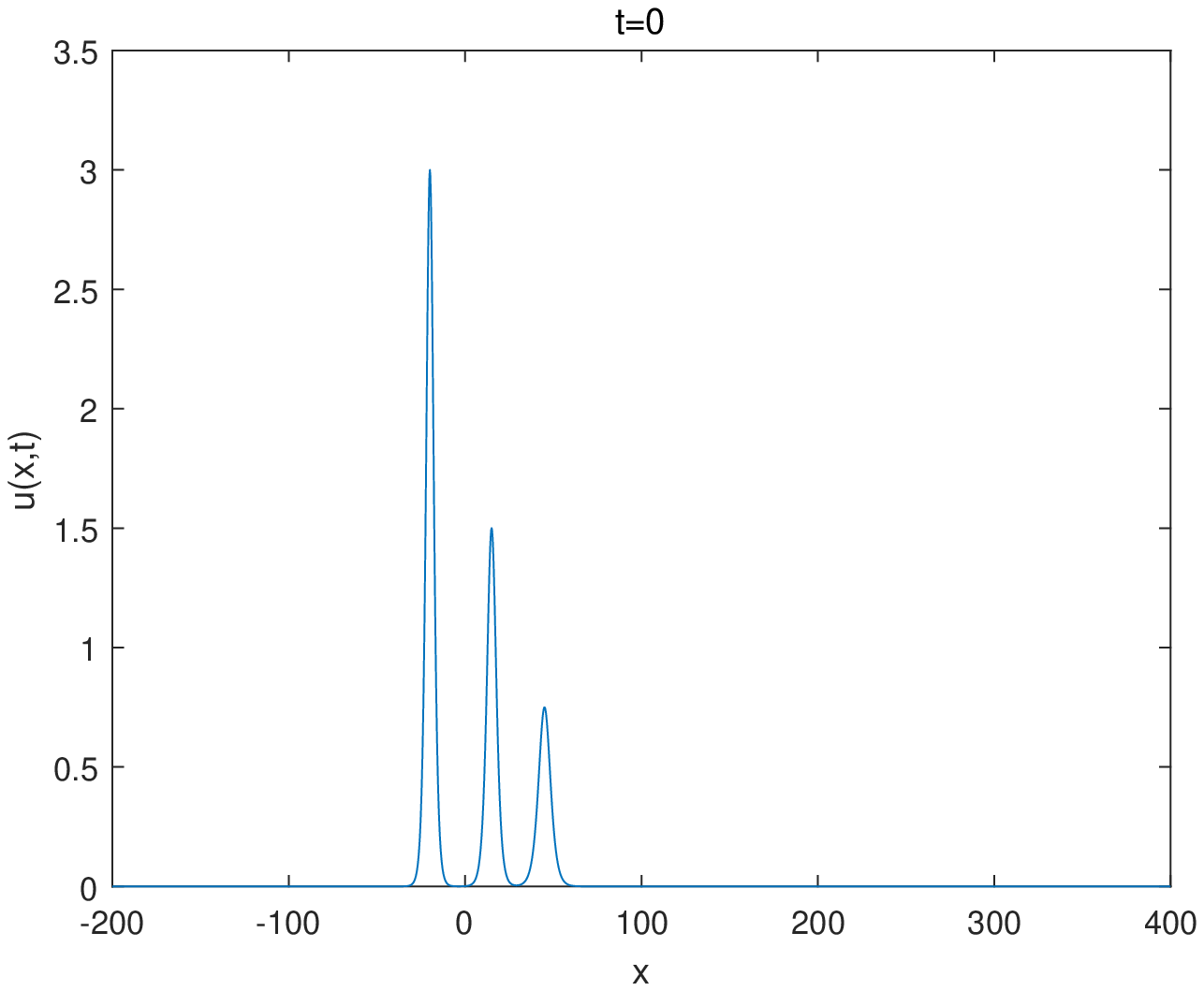}}
	\subfigure{
		\includegraphics[width=0.3\textwidth,height=0.3\textwidth]{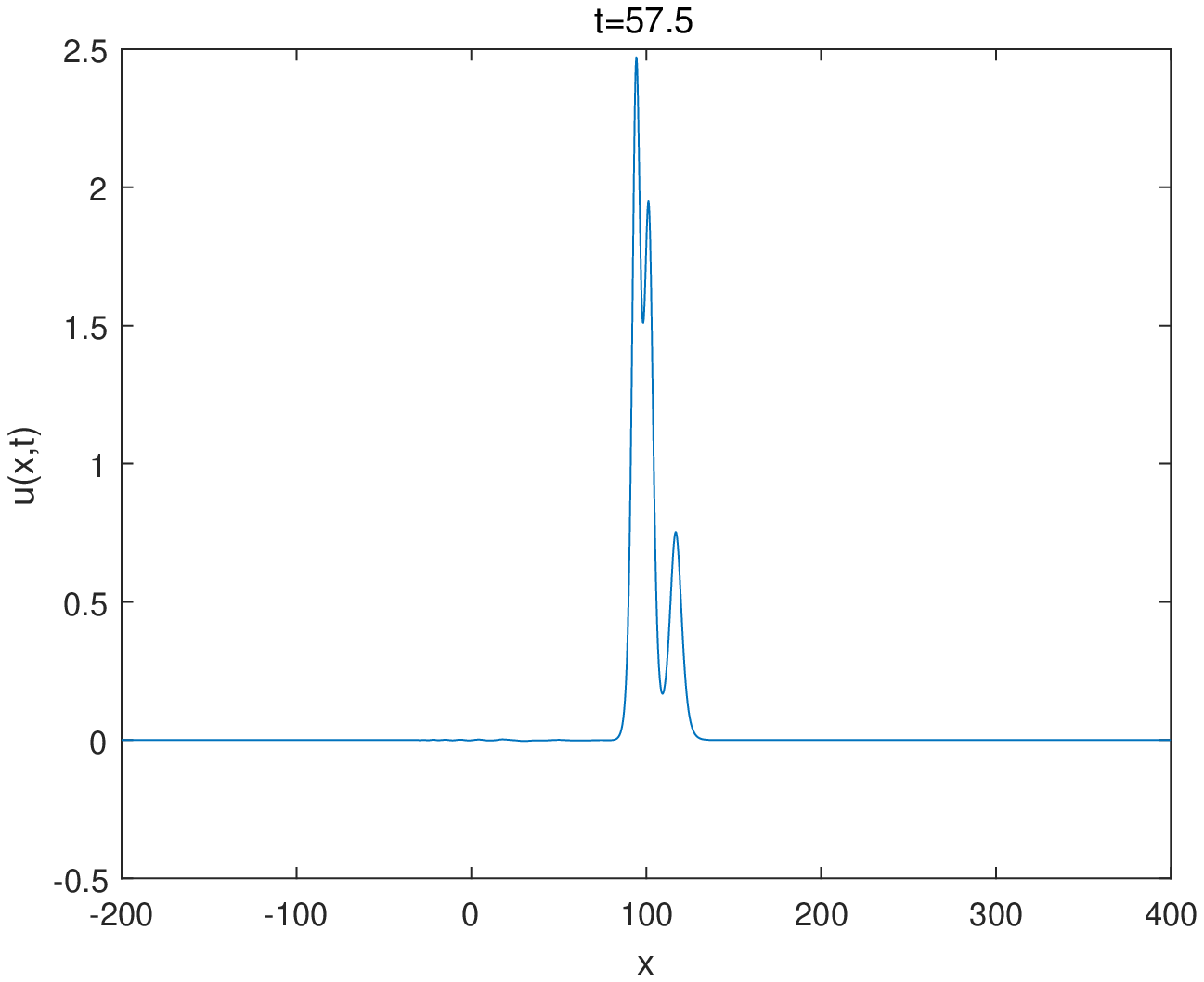}}
	\subfigure{
		\includegraphics[width=0.3\textwidth,height=0.3\textwidth]{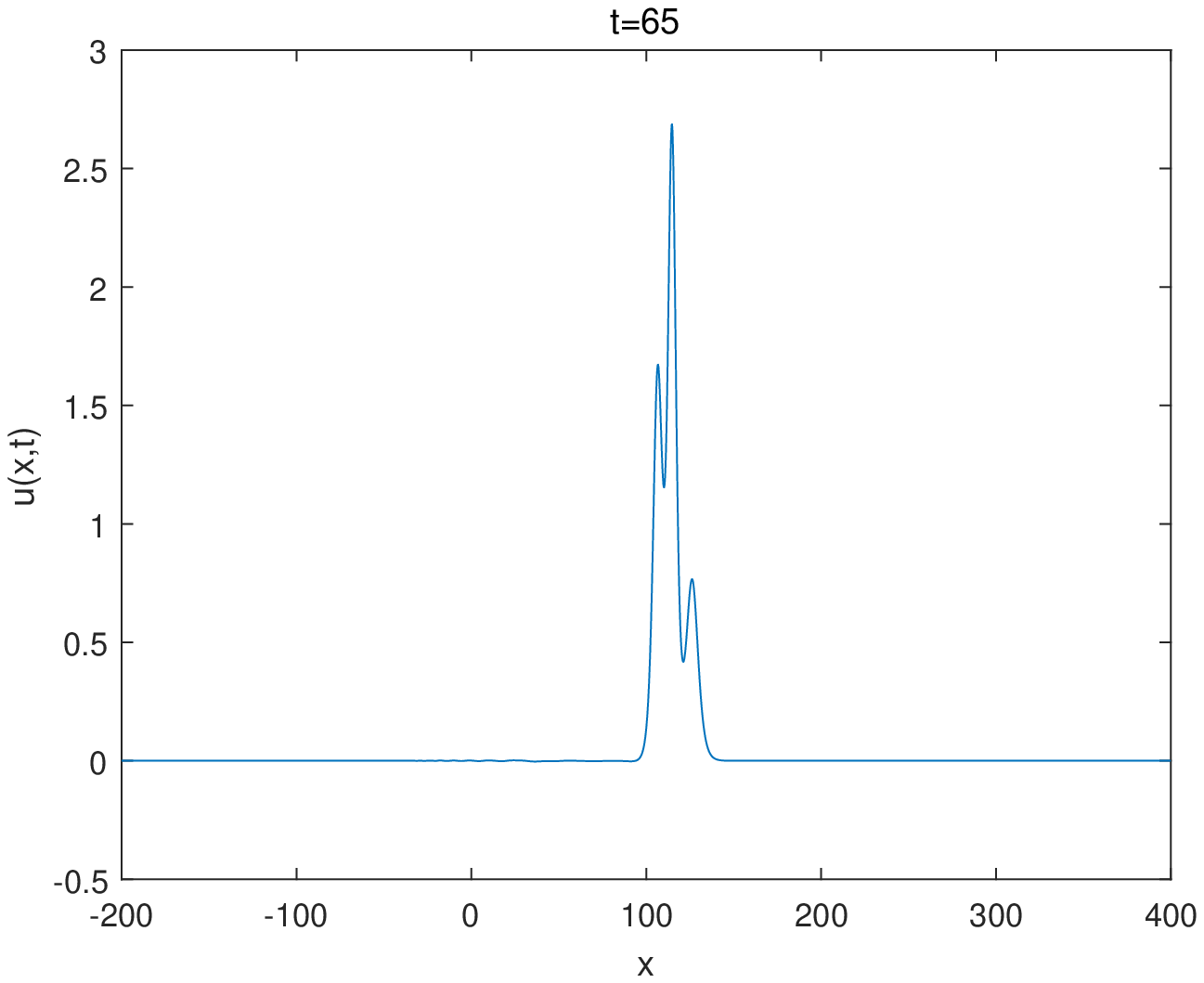}}\\
	\subfigure{
		\includegraphics[width=0.3\textwidth,height=0.3\textwidth]{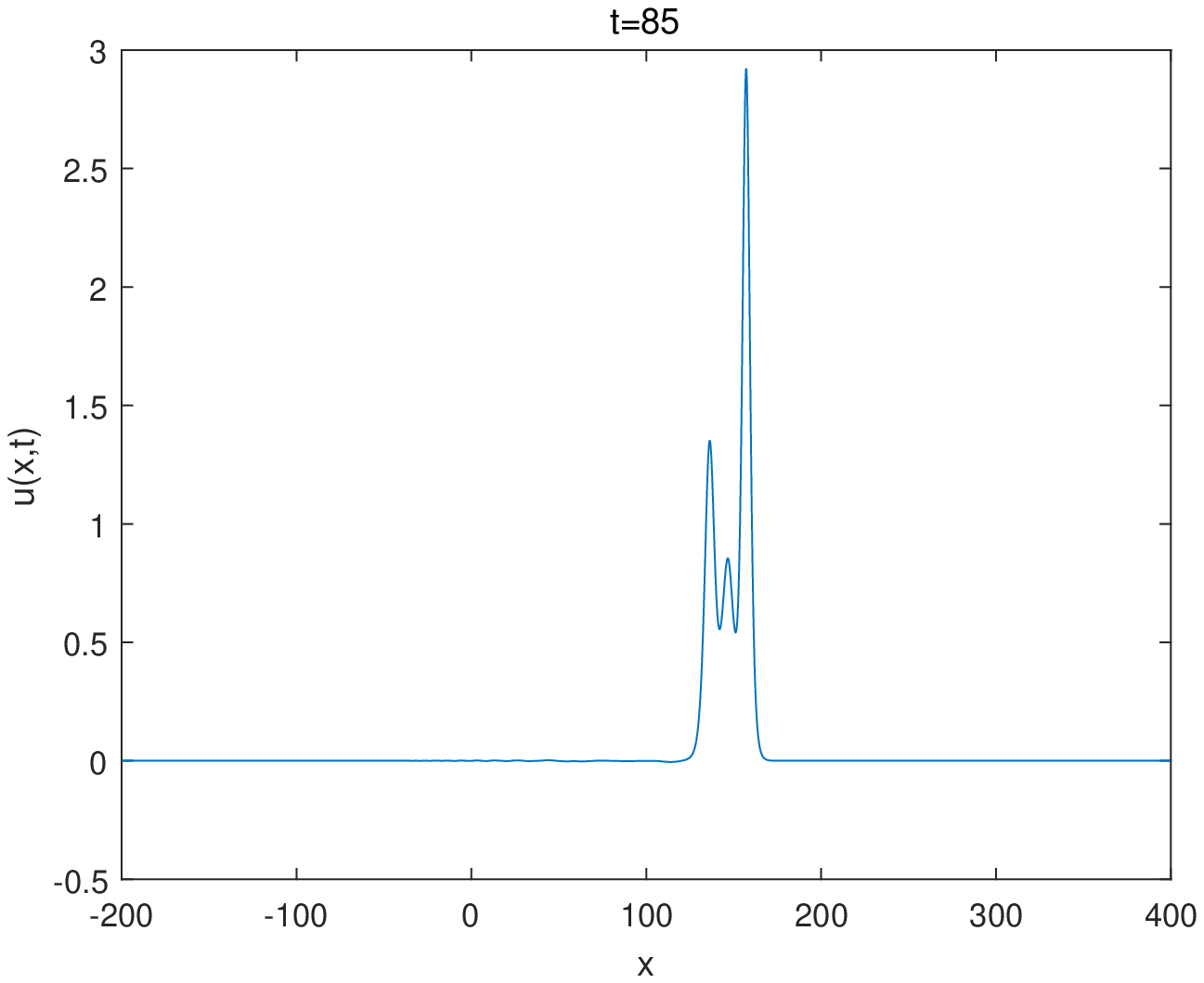}}
	\subfigure{
		\includegraphics[width=0.3\textwidth,height=0.3\textwidth]{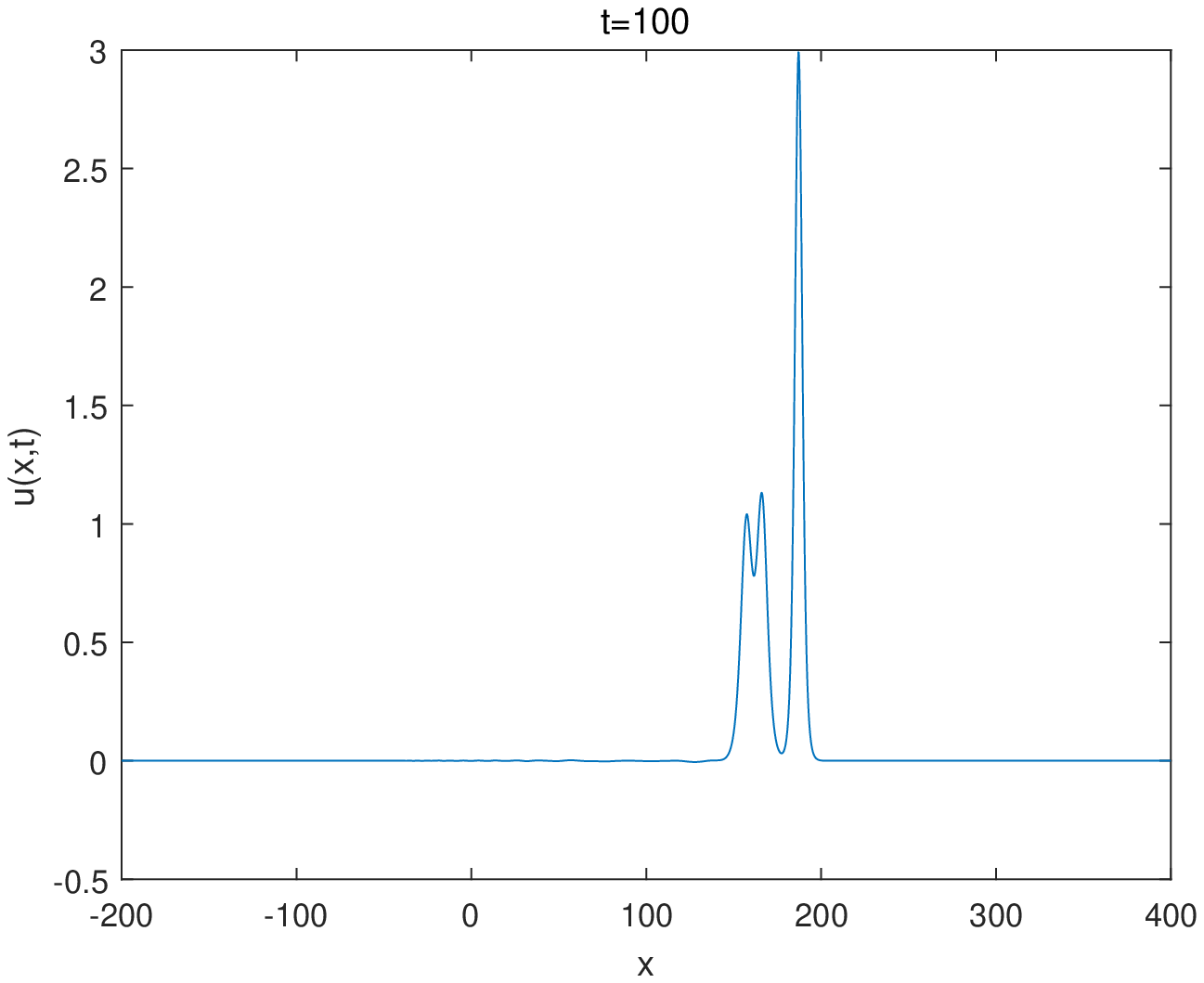}}
	\subfigure{
		\includegraphics[width=0.3\textwidth,height=0.3\textwidth]{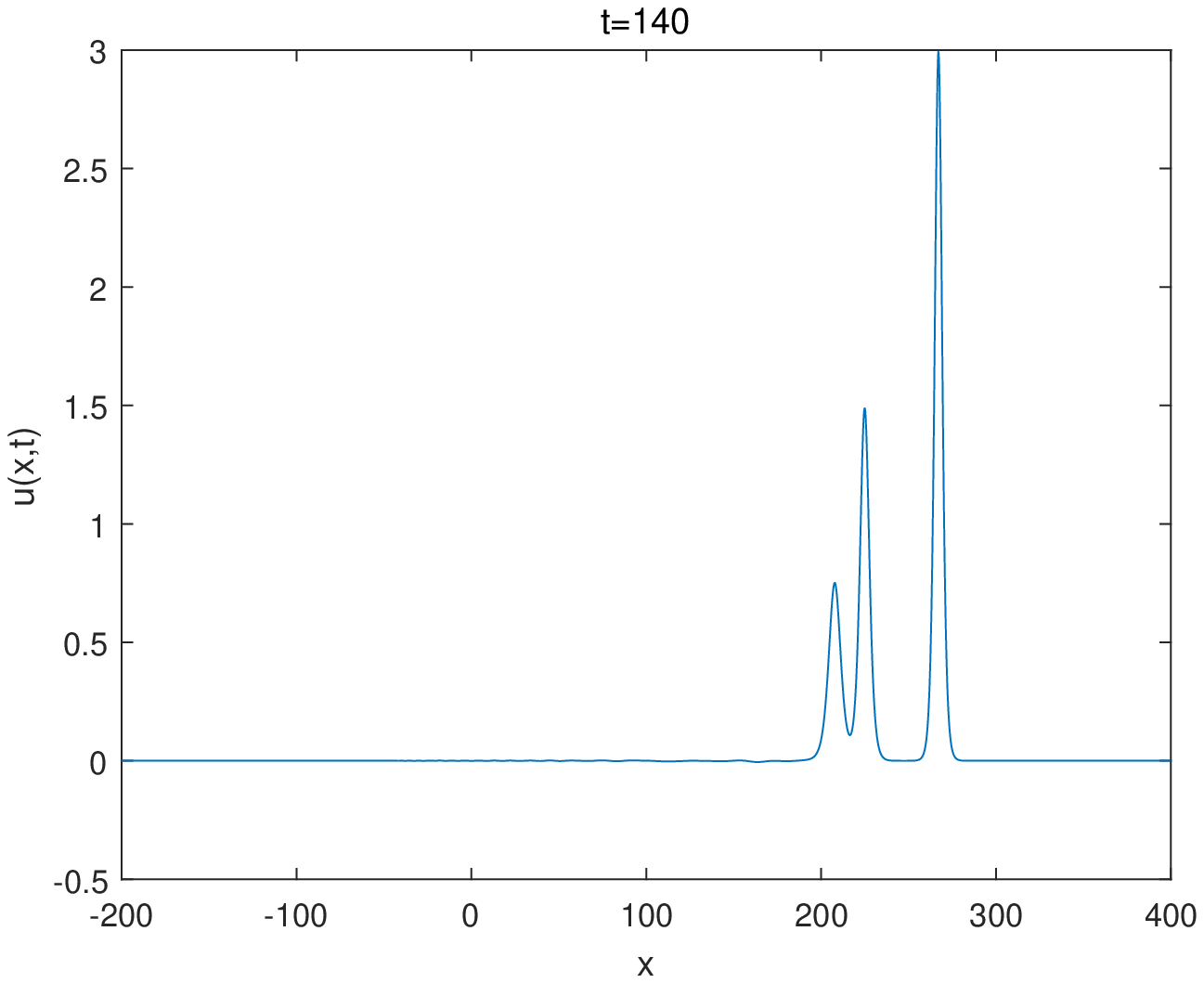}}
	\caption{\small The interaction of three solitary waves at different time using the scheme LICN.\label{fig:inter-LICN}}
\end{figure}

\begin{figure}[!htbp]
	\centering
	\subfigure{
		\includegraphics[width=0.35\textwidth,height=0.35\textwidth]{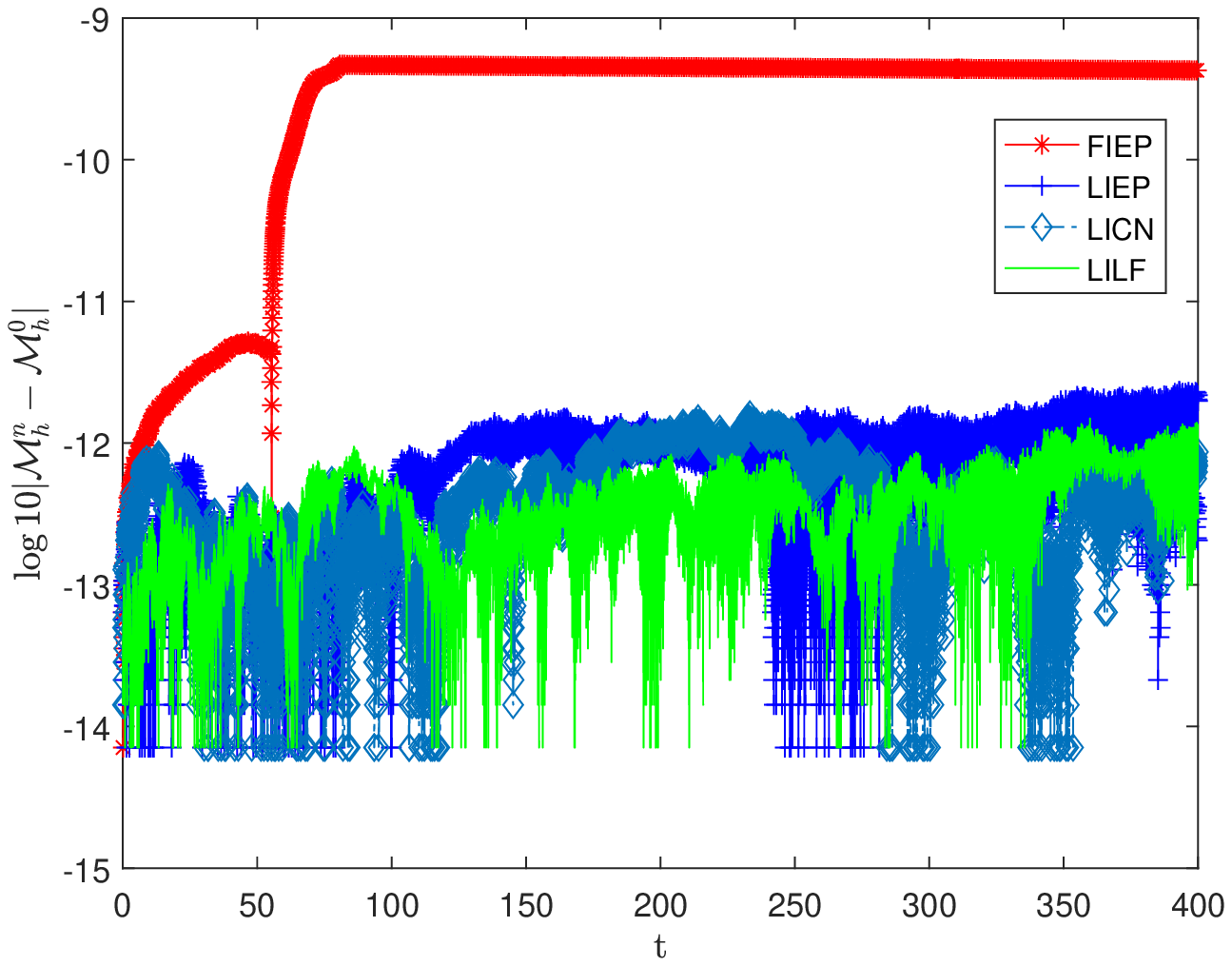}}
	\subfigure{
		\includegraphics[width=0.35\textwidth,height=0.35\textwidth]{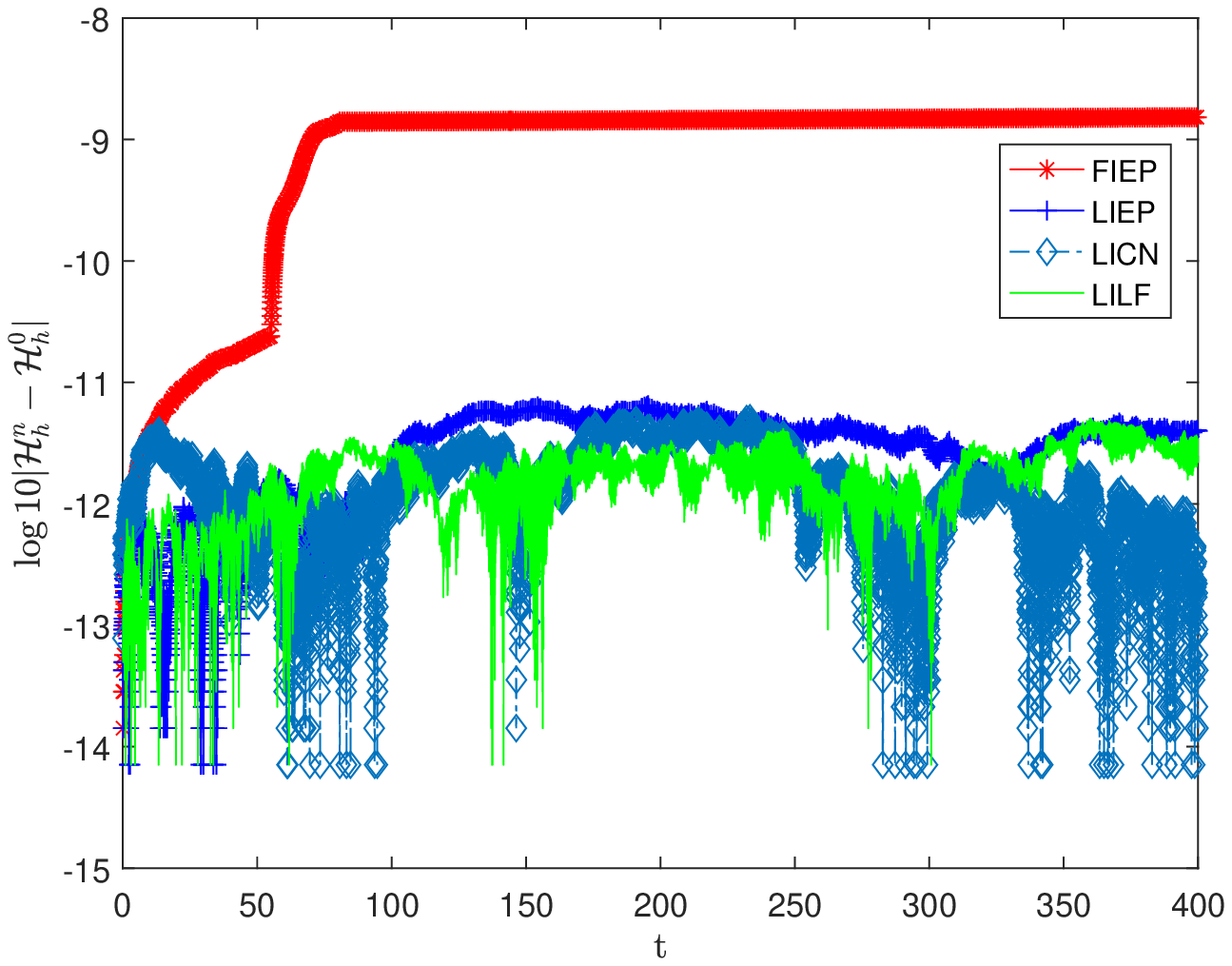}}
	\caption{\small The errors in mass (left) and energy (right) of the four schemes with $\tau=0.05$, $h=0.25$ and $x\in[-200,400]$ until $T=400$.\label{fig:mass-energy} }
\end{figure}

\noindent
$\textbf{Example\; 3}$ (The Maxwellian pulse) In this part, we have examined the evolution of an initial Maxwellian pulse into solitary waves for various values of the parameter $\delta$. Take the initial condition
\begin{align*}
u(x,0)=\exp(-(x-7)^2),\quad -40\leq x\leq 100.
\end{align*}
and all simulations are done with $\gamma=1$, $a=1$, $\tau=0.05$ and $h=0.05$. We mainly discuss each of the following cases: (i) $\sigma=0.04$, (ii) $\sigma=0.01$ and (iii) $\sigma=0.001$, respectively. The evolutions of the RLW equation at $T=55$ are just simulated by the scheme LILF due to the limit of page. For case (i), only one solitary wave is generated as shown in Fig. \ref{fig:evol-LILF} (a), for case (ii), three stable solitary waves are generated as shown in Fig. \ref{fig:evol-LILF} (b) and for case (iii), the Maxwellian initial condition has decayed into about six solitary waves as shown in Fig. \ref{fig:evol-LILF} (c). The errors in mass and energy are shown in Fig. \ref{fig:mass-energy-3}. The results imply the mass and energy are captured exactly by the schemes LIEP, LICN and LILF than FIEP.
\begin{figure}[!htbp]
	\centering
	\subfigure[]{
		\includegraphics[width=0.3\textwidth,height=0.3\textwidth]{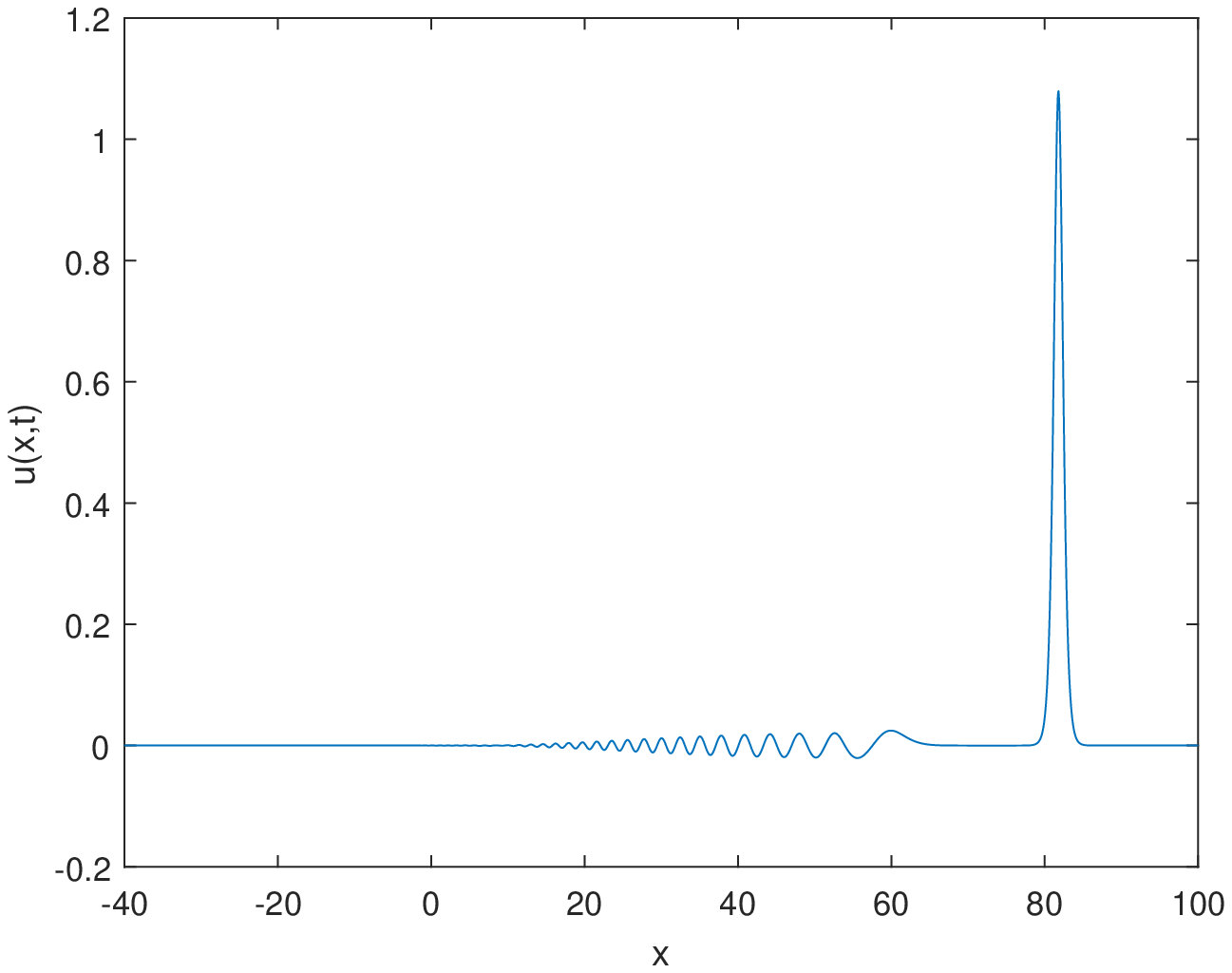}}
	\subfigure[]{
		\includegraphics[width=0.3\textwidth,height=0.3\textwidth]{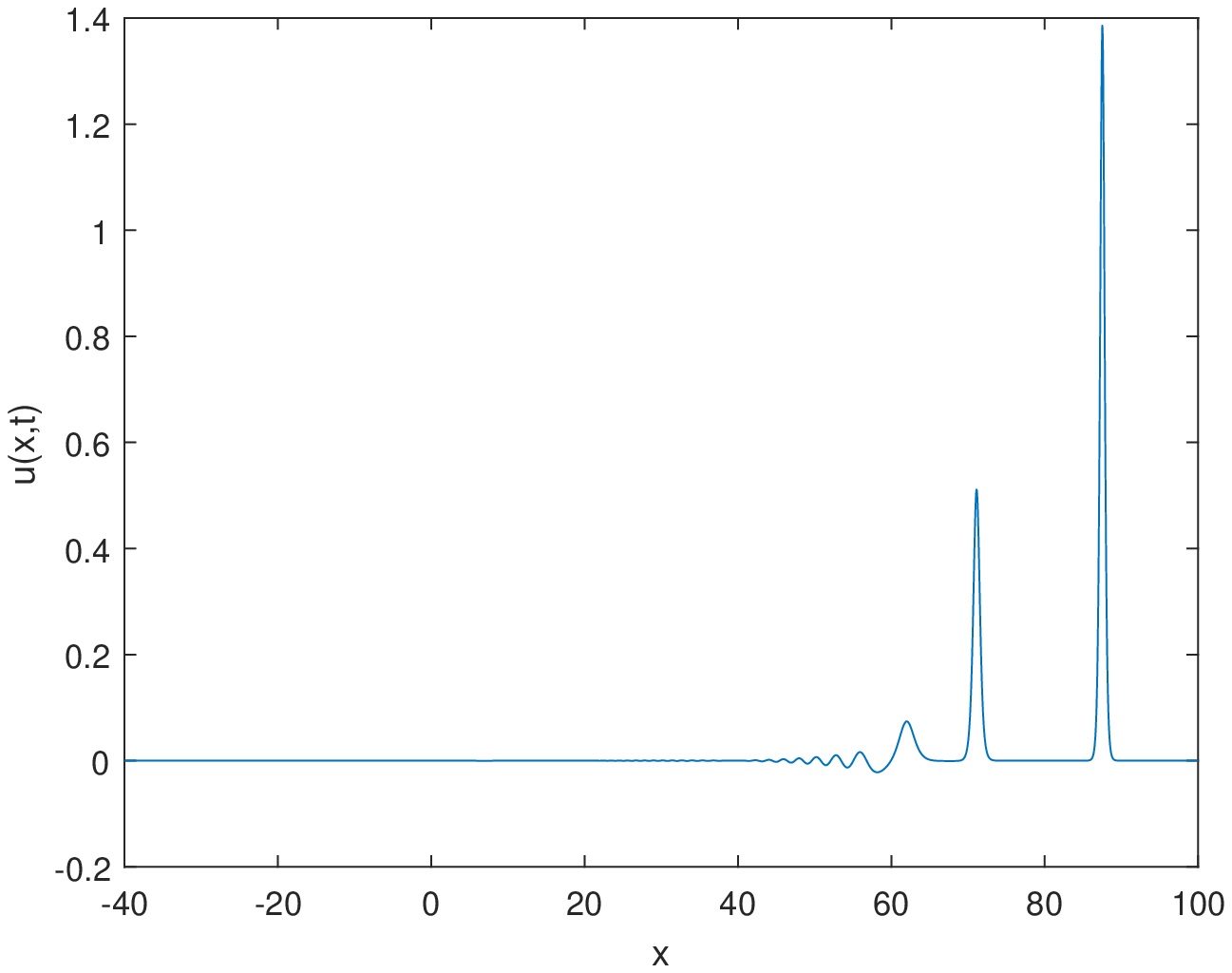}}
	\subfigure[]{
		\includegraphics[width=0.3\textwidth,height=0.3\textwidth]{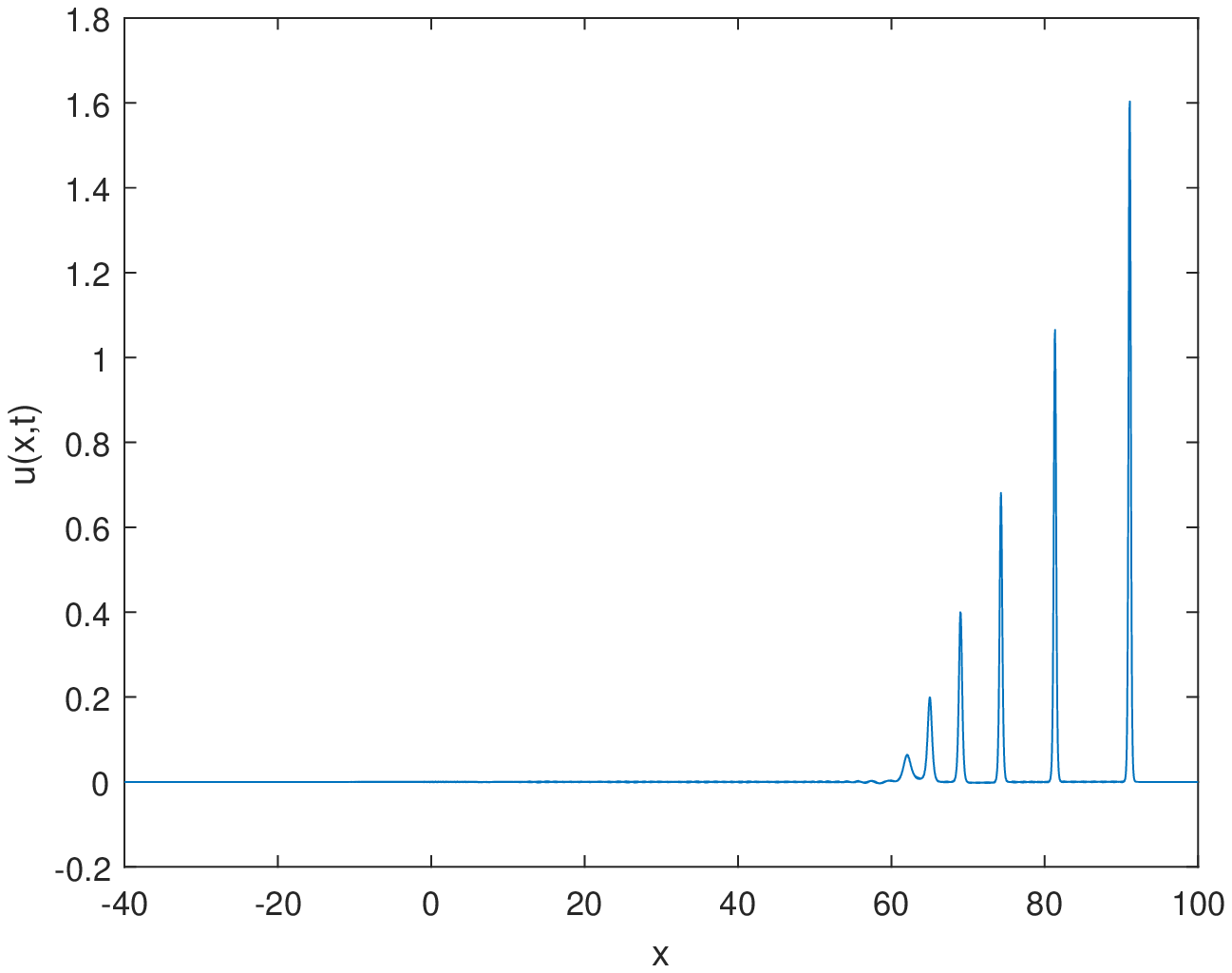}}
	\caption{\small The evolution of the RLW equation using the scheme LILF at $T=55$.\label{fig:evol-LILF} }
\end{figure}

\begin{figure}[!htbp]
	\centering
	\subfigure{
		\includegraphics[width=0.35\textwidth,height=0.35\textwidth]{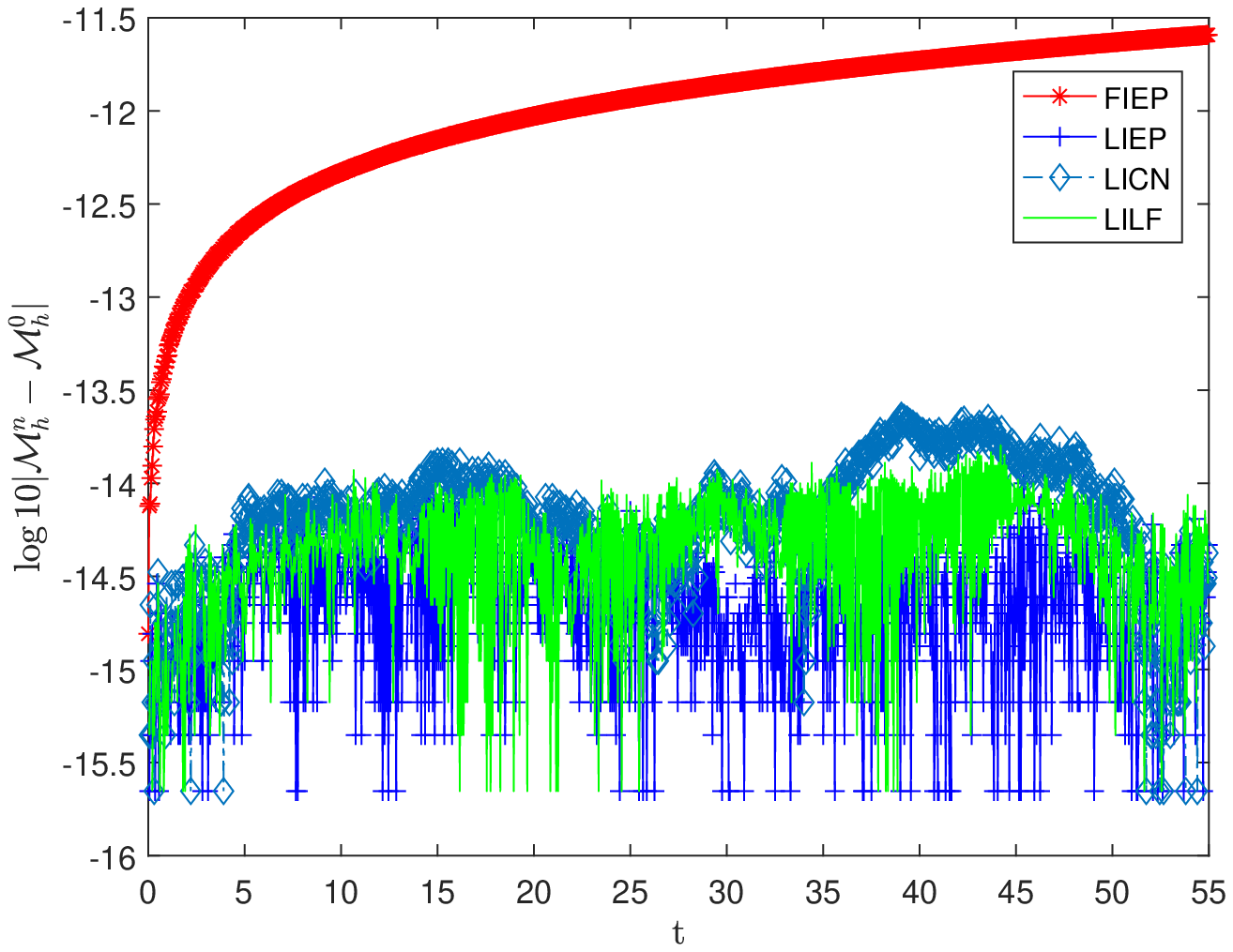}}
	\subfigure{
		\includegraphics[width=0.35\textwidth,height=0.35\textwidth]{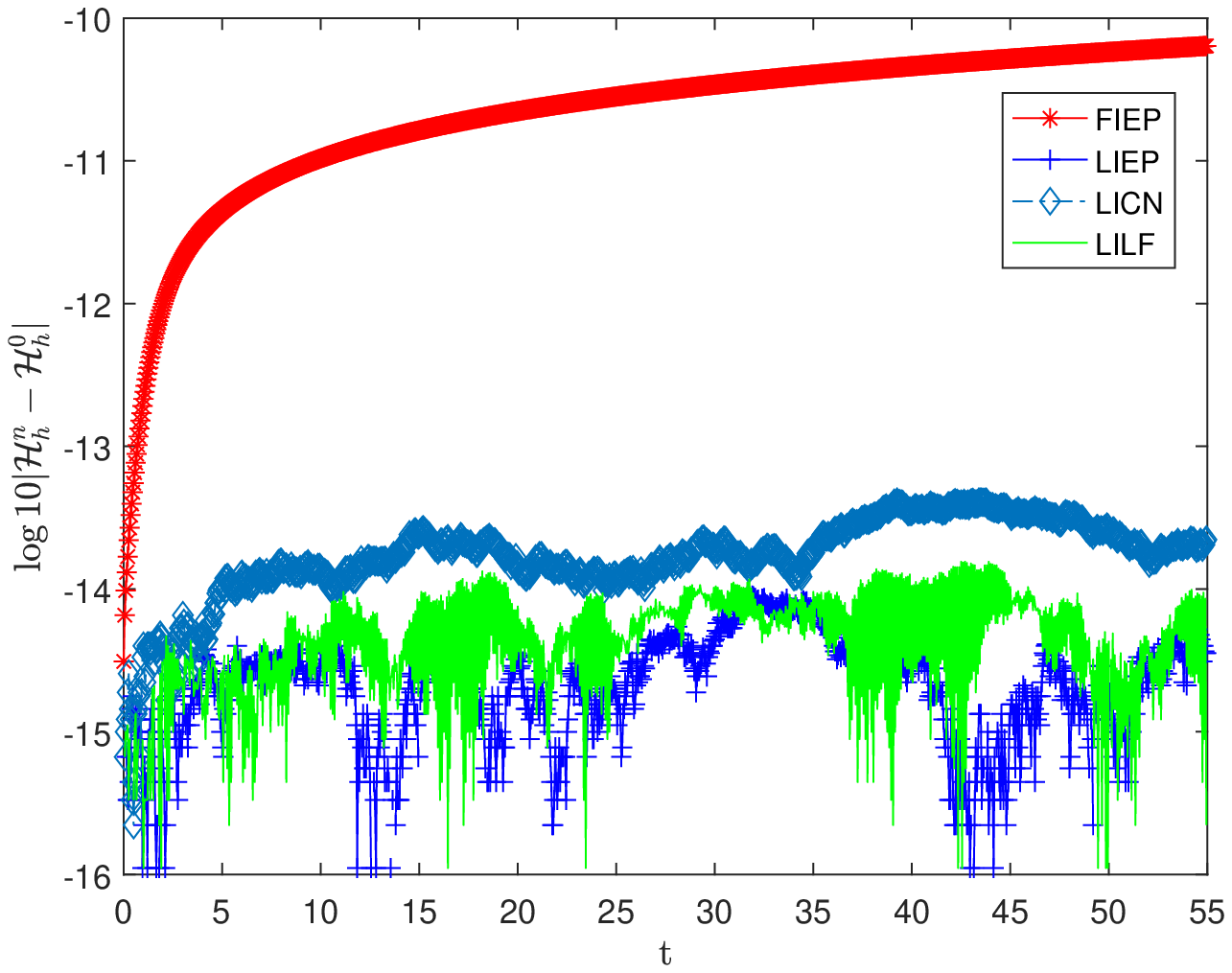}}
	\caption{\small The errors in mass (left) and energy (right) of the four schemes with $\sigma=0.01$ $\tau=0.05$ and $h=0.05$ and $x\in[-40,100]$ until $T=55$.\label{fig:mass-energy-3}}
\end{figure}

\noindent
$\mathbf{Example\; 4}$ (The undular bore propagation) As our last test problem, we consider the development of an undular bore with the initial condition
\begin{align*}
u(x,0)=\dfrac{U_0}{2}\left[1-\tanh(\dfrac{x-x_0}{d})\right],
\end{align*}
and boundary conditions
\begin{align*}
u(a,t)=U_0,\quad u(b,t)=0,
\end{align*}
where $u(x,0)$ denotes the elevation of the water above the equilibrium surface at time $t=0$, $U_0$ represents the magnitude of the change in water level which is centered on $x=x_0$ and $d$ represents the slope between the still water and deeper water. Under the above physical boundary conditions, the mass and energy are not constants but increase linearly throughout the simulation at the following rates \cite{ZakiSI2001}
\begin{align}\label{linear-behaviour-eq}
\begin{split}
M_1&=\dfrac{d}{dt}M=\dfrac{d}{dt}\int udx=U_0+\dfrac{1}{2}U_0^2,\\[0.3cm]
M_3&=\dfrac{d}{dt}H=\dfrac{d}{dt}\int\left(\dfrac{\gamma}{6}u^3+\dfrac{1}{2}u^2\right)dx=\dfrac{1}{2}U_0^2+\dfrac{\gamma}{2}U_0^3+\dfrac{\gamma}{8}U_0^4.
\end{split}
\end{align}

For the simulation, computations use the parameters $\sigma=1/6$, $\gamma=1.5$, $a=1$, $x_0=0$, $h=0.24$, $\tau=0.1$ and $U_0=0.1$. The development of the undular bore at different times from $T=0$ to $T=250$ for $d=2$ and $d=5$ are represented in Fig. \ref{fig:d2-d5}, respectively. It is observed that both waves are stable without any numerical perturbations. The rate of the growth of amplitudes of the undulations seems to be fast in the beginning. The reason is that
the formation of the undular bore depends on the form of the initial undulation.
To see the effect of the initial undulation, view
of the formation of the leading undulation for the both steep and gentle slope is illustrated by drawing maximal $u$ versus time $t$ in Fig. \ref{fig:mass-energy-d2-d5} (a). As it seen from this picture, we find the magnitudes of the leading undulations becomes very close after certain time. For the steep slope, rate of the
growth undulation is sharp and then decrease slowly. Fig. \ref{fig:mass-energy-d2-d5} (b) and (c) show the linear behaviour of the mass and energy in time for $d=2$ and $d=5$, respectively. We find that these results are consistent with \eqref{linear-behaviour-eq} very well.\\

\begin{figure}[!htbp]
	\centering
	\subfigure{
		\includegraphics[width=0.3\textwidth,height=0.3\textwidth]{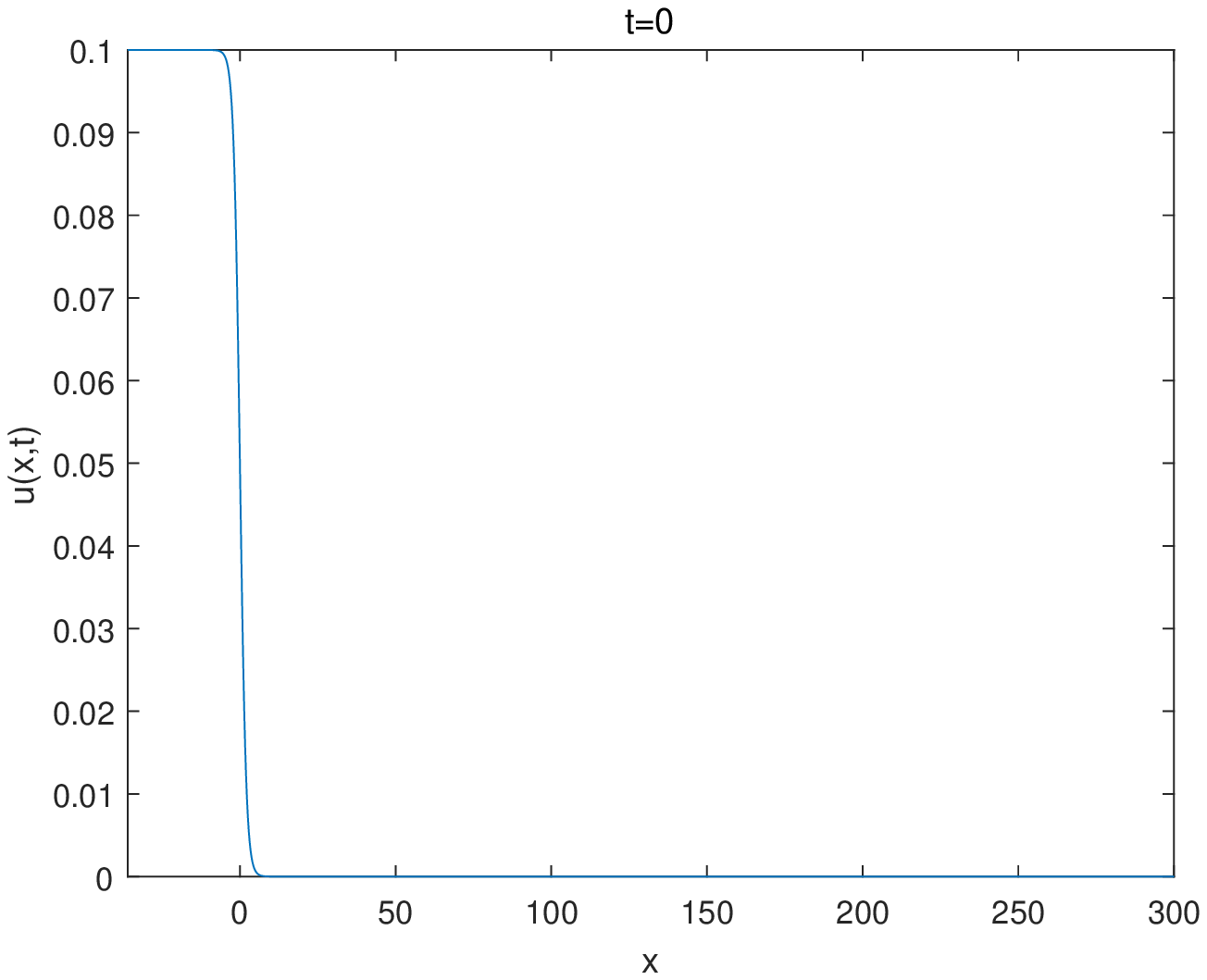}}
	\subfigure{
		\includegraphics[width=0.3\textwidth,height=0.3\textwidth]{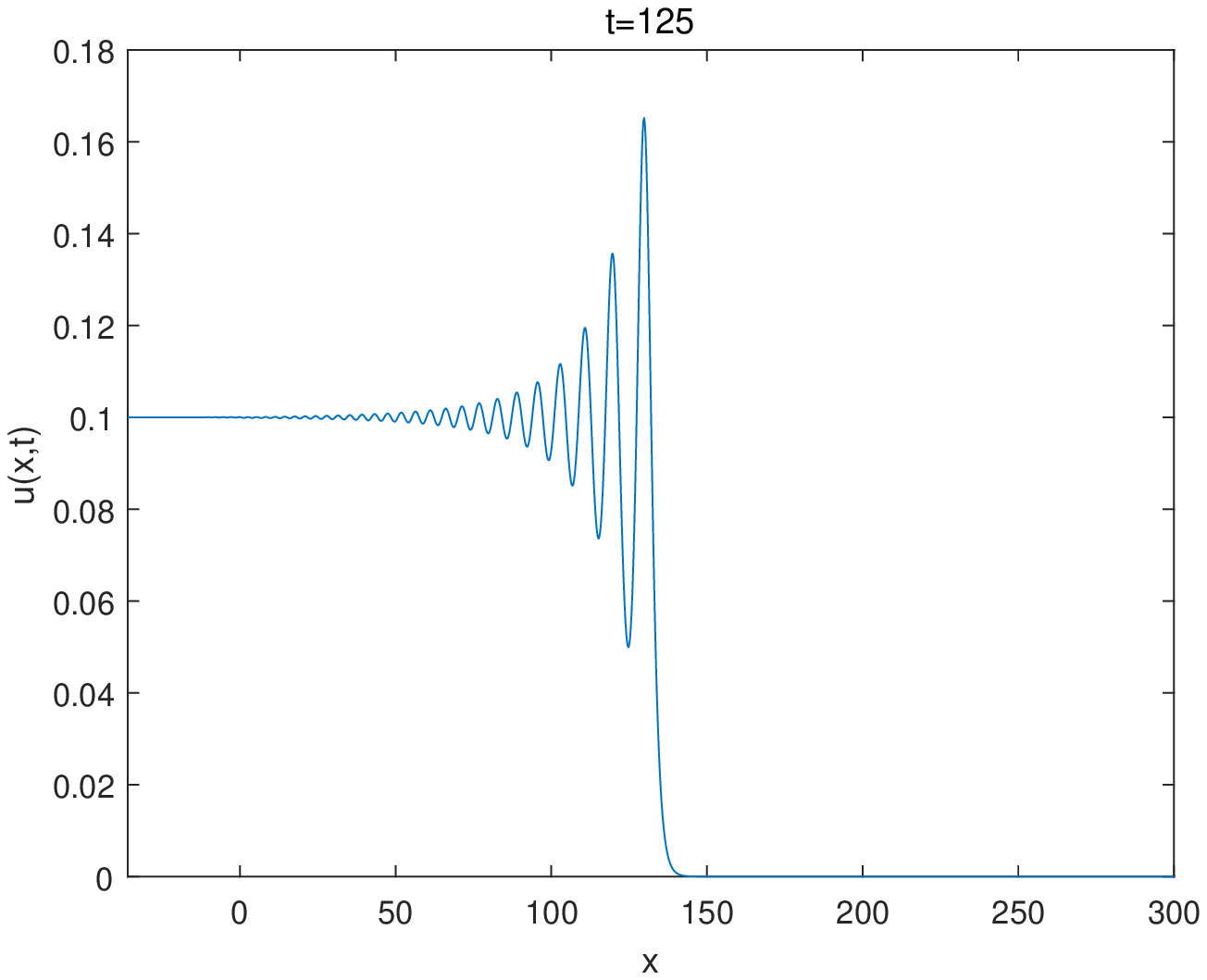}}
	\subfigure{
		\includegraphics[width=0.3\textwidth,height=0.3\textwidth]{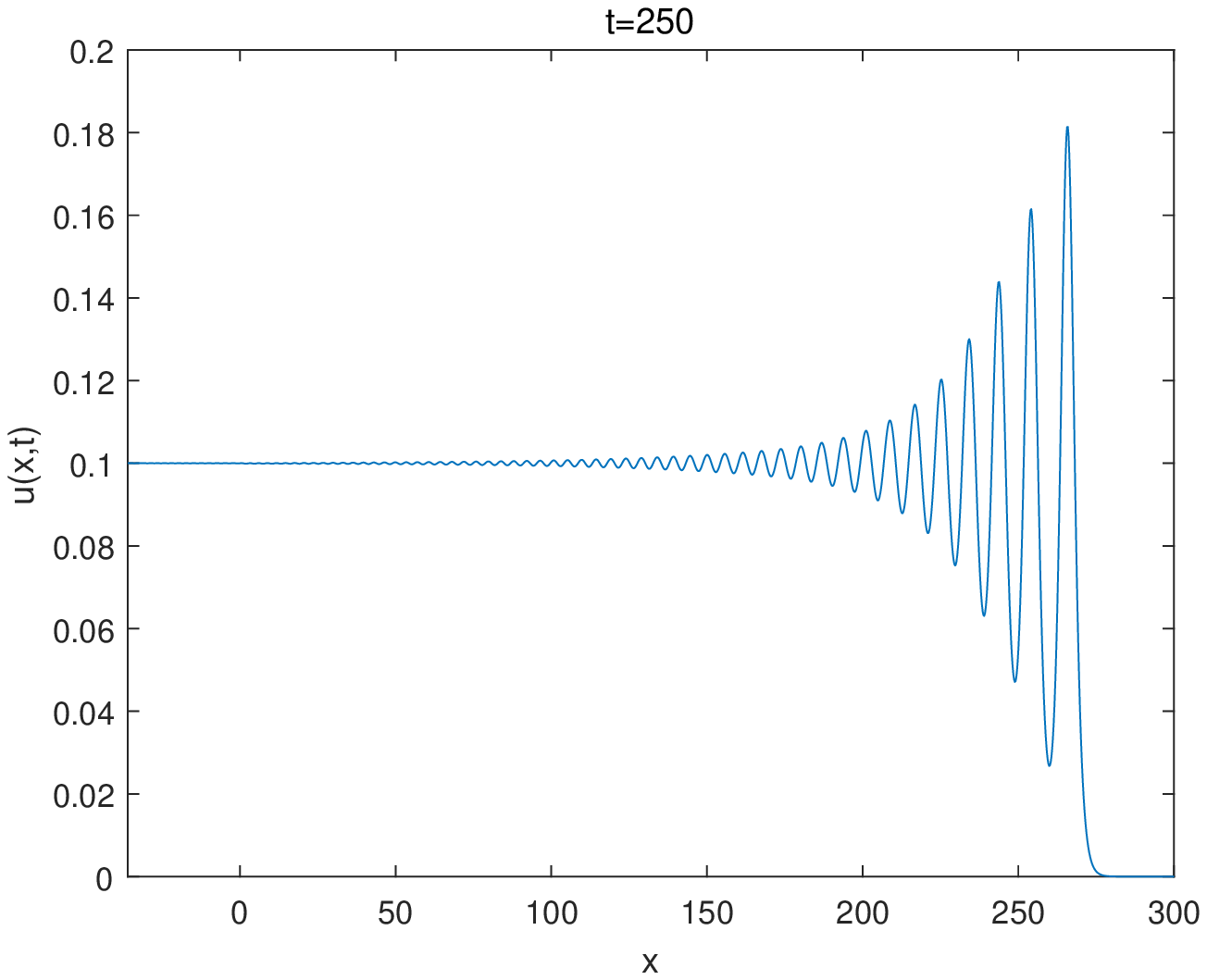}}\\
	\subfigure{
		\includegraphics[width=0.3\textwidth,height=0.3\textwidth]{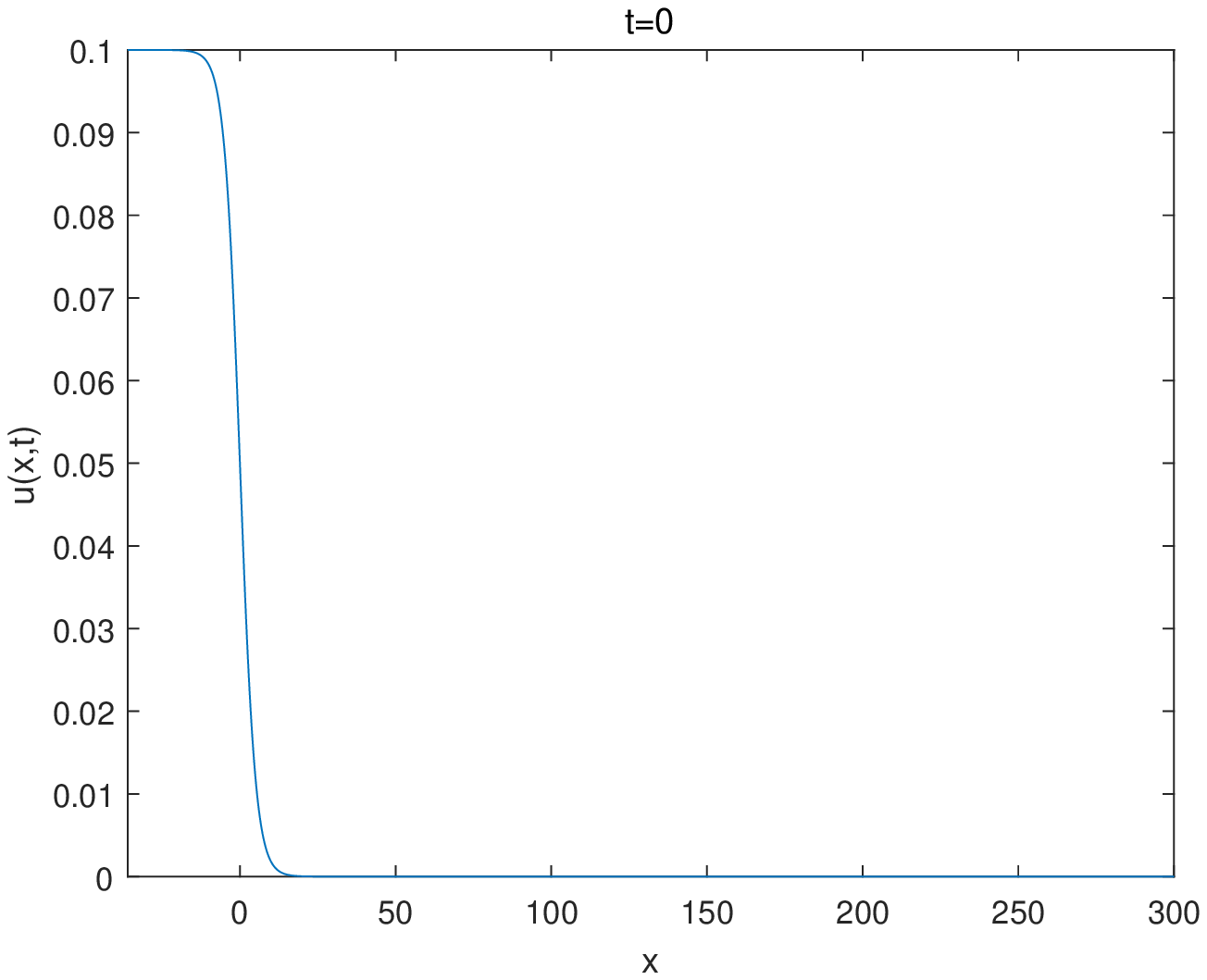}}
	\subfigure{
		\includegraphics[width=0.3\textwidth,height=0.3\textwidth]{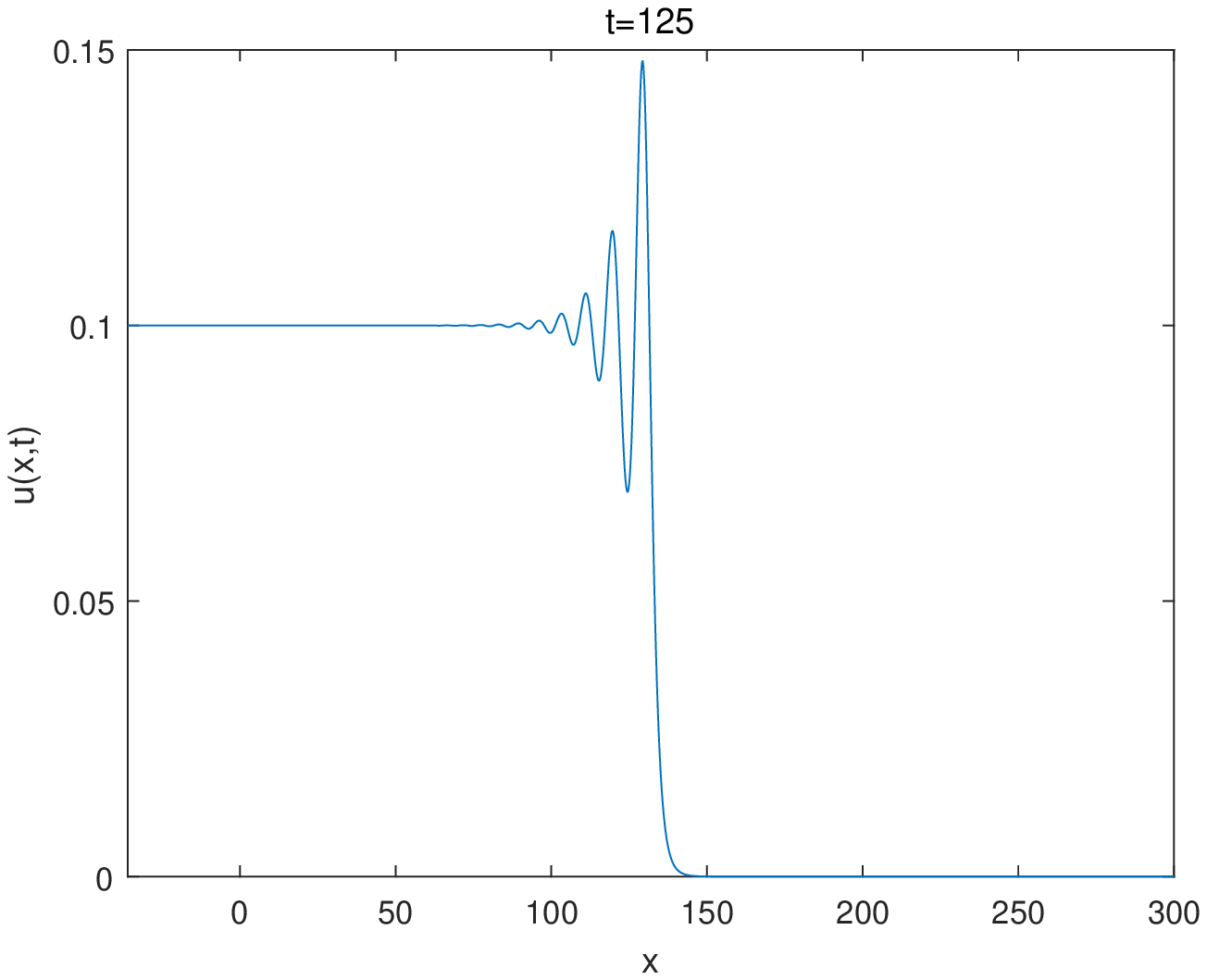}}
	\subfigure{
		\includegraphics[width=0.3\textwidth,height=0.3\textwidth]{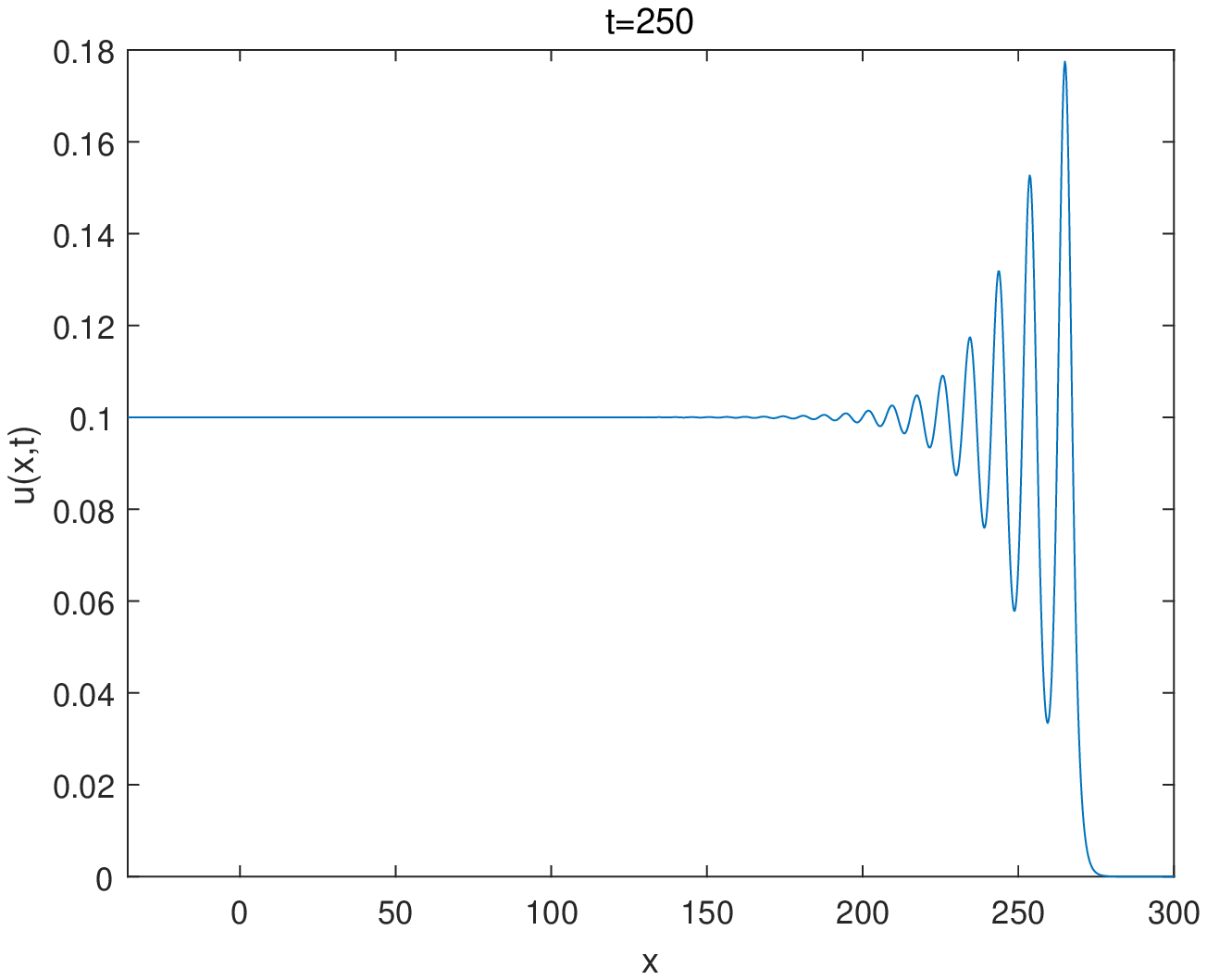}}
	\caption{\small Initial and undulation profiles with gentle $d=2$ (top) and $d=5$ (bottom) at different times using the scheme LILF.\label{fig:d2-d5}}
\end{figure}

\begin{figure}[!htbp]
	\centering
	\subfigure[]{
		\includegraphics[width=0.30\textwidth,height=0.3\textwidth]{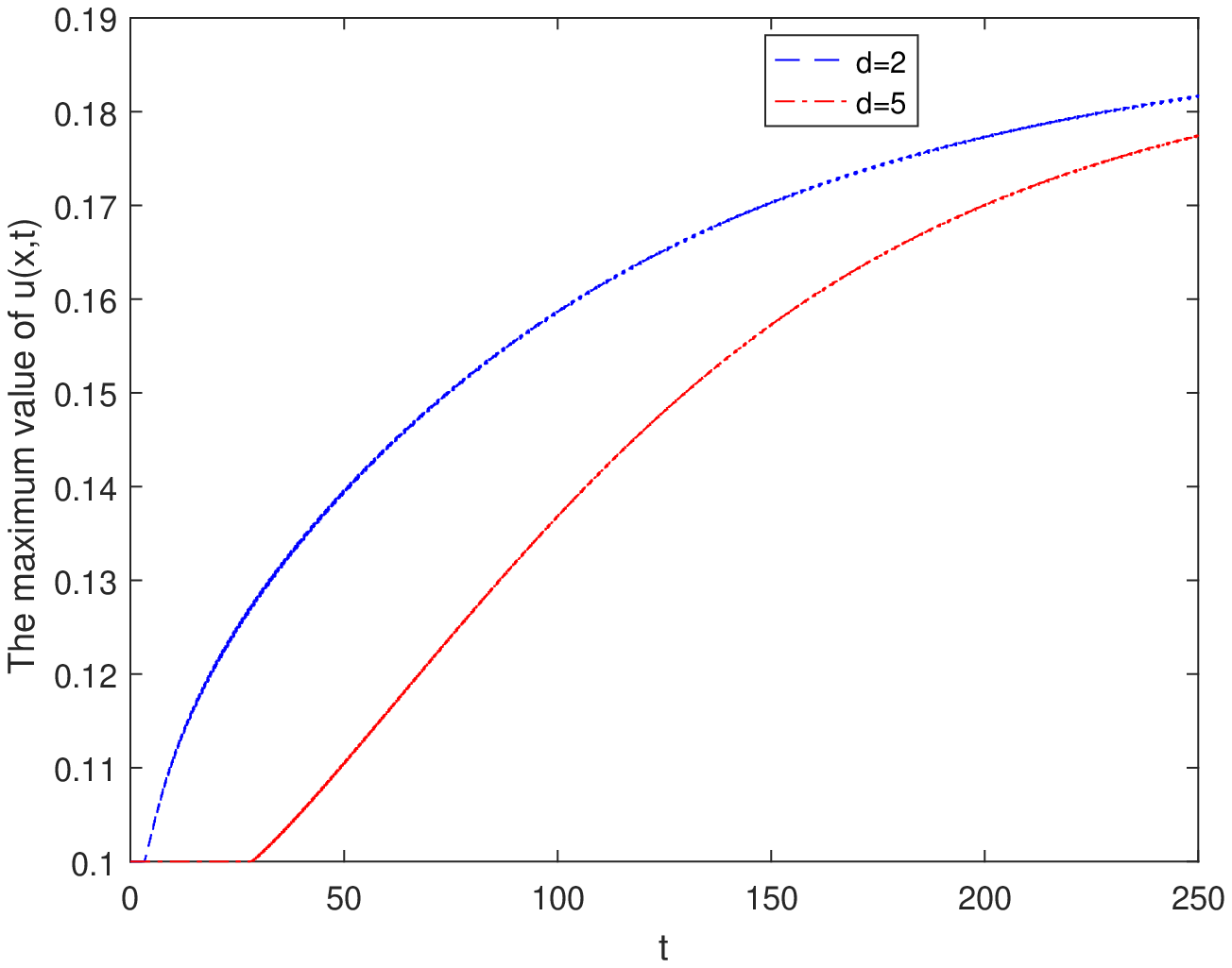}}
	\subfigure[]{
		\includegraphics[width=0.30\textwidth,height=0.3\textwidth]{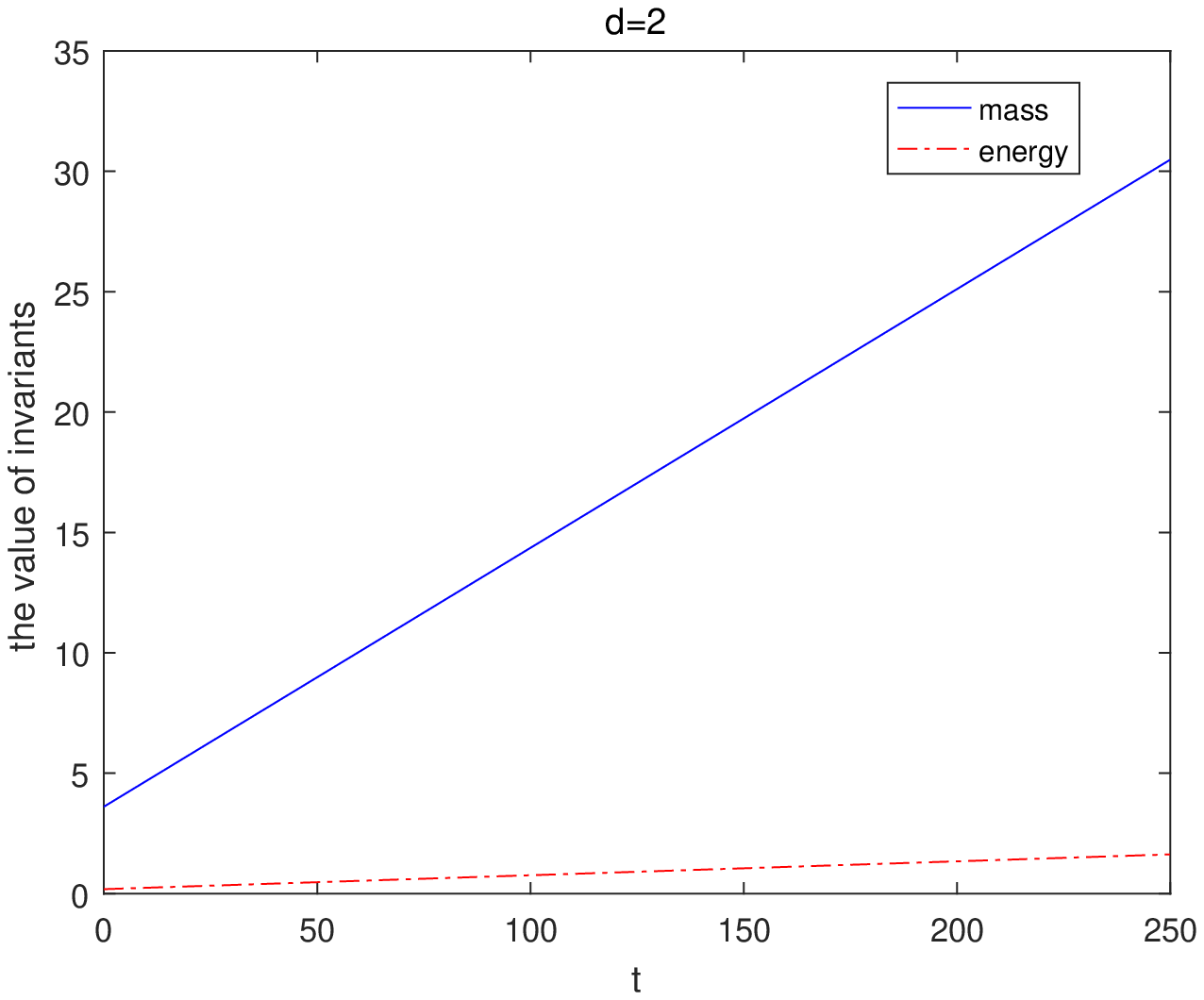}}
	\subfigure[]{
		\includegraphics[width=0.3\textwidth,height=0.3\textwidth]{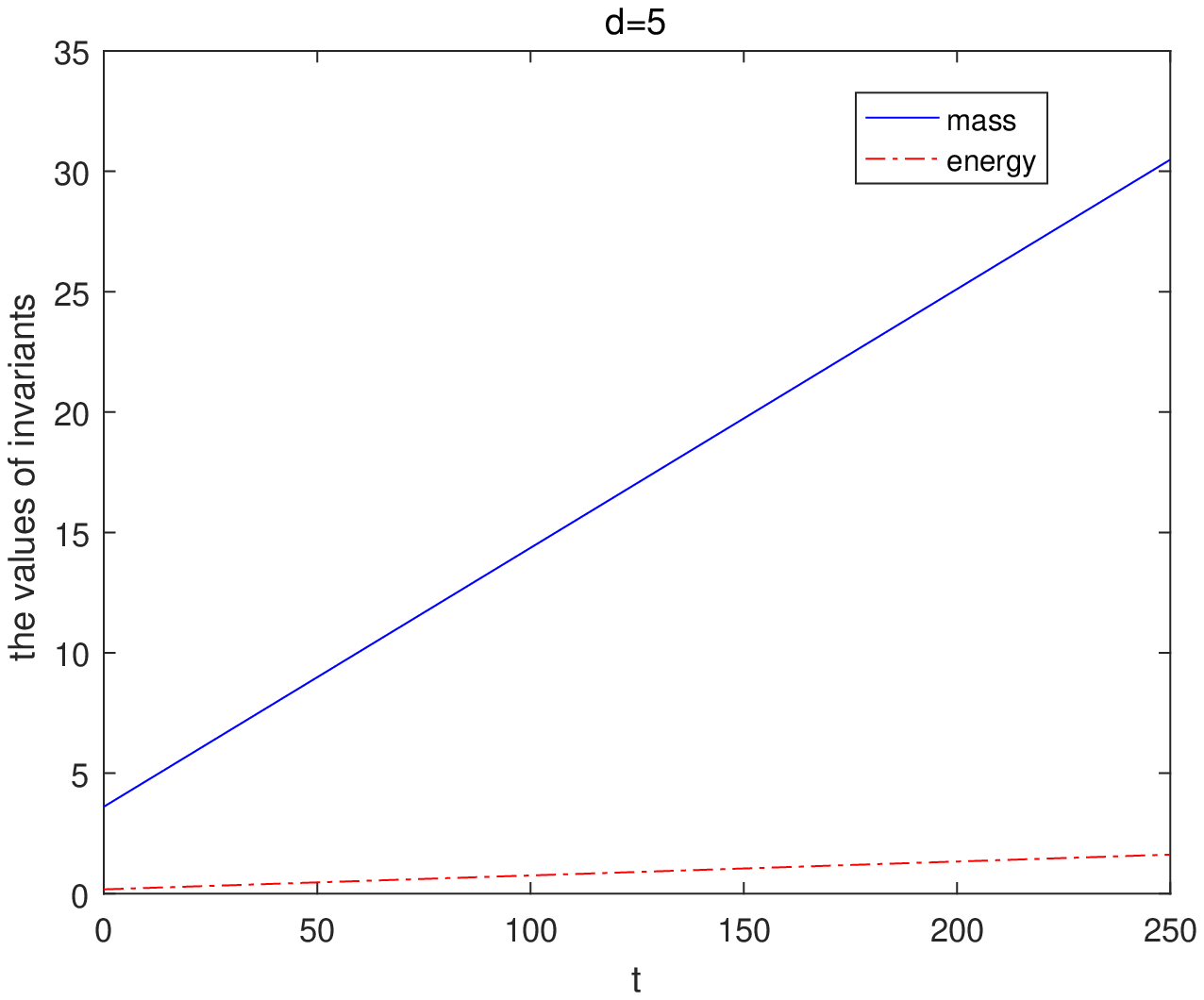}}
	\caption{\small (a) Development of the first undulation from $t=0$ to $t=250$ and (b) the behavior of the invariants for $d=2$ and (c) $d=5$ by LILF.\label{fig:mass-energy-d2-d5}}
\end{figure}

\section{Conclusions}
In this paper, incorporating the modified finite volume method with the discrete variational derivative method, linear-implicit Crank-Nicolson and linear-implicit Leap-frog scheme, we proposed and analyzed a fully implicit and three linear-implicit energy-preserving schemes for RLW equation. In this study, we consider the weak formulation and presented the modified finite volume method for the framework of the scheme. Moreover, we strictly proved that the semi-discrete system preserves a semi-discrete energy. For time discretization, the DVDM with a point and two points numerical solutions is used to preserve the semi-discrete energy obtained by the first step. The perfect combination of mFVM and DVDM results in a fully implicit scheme and a linear-implicit scheme for RLW. In addition, we develop a new idea of transforming the energy into a quadratic function
to derive energy stable schemes. Based on this strategy, we still discrete the weak form of the
equivalent formulation for the RLW equation by the so-called modified finite volume method
in space. And then we consider the linear-implicit Crank-Nicolson and  linear-implicit Leap-frog scheme  in temporal direction, two second-order
linear energy-preserving numerical schemes are obtained readily. Finally, numerical results show that our proposed numerical schemes have excellent performance
in providing accurate solution and preserving the discrete invariants. Compared with the scheme FIEP, the linear-implicit schemes LIEP, LICN and LILF improve the computational efficiency. In the future, the linear momentum-preserving scheme and the corresponding error analysis is one of our on-going project.

\end{document}